\DeclareMathAlphabet\mathbfcal{OMS}{cmsy}{b}{n}
\def\l@subsection{\@tocline{2}{0pt}{2pc}{5pc}{}}
\def\m{{\mathfrak m}} 
\def\O{{\mathcal{O}}}
\def\PP{f}
\def\PP{\overline{\PP}}
\newtheorem{theorem}{Theorem}[section]
\newtheorem{lemma}[theorem]{Lemma}
\newtheorem{proposition}[theorem]{Proposition}
\newtheorem{corollary}[theorem]{Corollary}
\theoremstyle{definition}
\newtheorem{definition}[theorem]{Definition}
\newtheorem{remark}[theorem]{Remark}
\newtheorem{example}[theorem]{Example}
\long\def\alert#1{\smallskip{\hskip\parindent\vrule%
\vbox{\advance\hsize-2\parindent\hrule\smallskip\parindent.4\parindent%
\narrower\noindent#1\smallskip\hrule}\vrule\hfill}\smallskip}
\def\ann{\mathrm{ann}}
\def\ff{\mathfrak}
\def\Spec{\mbox{\rm Spec}}
\def\ker{{\mbox{\rm{Ker} }}}
\def\Max{\mbox{\rm Max}}
\def\Id{\mbox{\rm Id}}
\def\PP{{\mathcal P}}
\def\cal{\mathcal}
\begin{document}

\title{Realization of spaces of commutative rings}

\author{Laura Cossu}
\address{\parbox{\linewidth}{(Laura Cossu) University of Cagliari\\
Department of Mathematics and Computer Science\\
Via Ospedale 72, 
09124 Cagliari, Italy}}
\email{laura.cossu3@unica.it}

\author{Bruce Olberding}
\address{\parbox{\linewidth}{(Bruce Olberding) New Mexico State University\\ Department of Mathematical Sciences\\ 1290 Frenger Mall, Las Cruces, New Mexico 88003-8001, USA}}
\email{bruce@nmsu.edu}

\begin{abstract} Motivated by recent work on the  use of topological methods to study collections of rings between an integral domain and its quotient field, we examine spaces of subrings of a commutative ring, endowed  with the Zariski or patch topologies. We introduce three notions to study such a space $X$: patch bundles, patch presheaves and patch algebras. When $X$ is compact and Hausdorff, patch bundles give a way to approximate $X$ with topologically more tractable spaces, namely Stone spaces. Patch presheaves encode the space $X$ into stalks of a presheaf of rings over a Boolean algebra, thus giving a more geometrical setting for studying $X$. To both objects, a patch bundle and a patch presheaf, we associate what we call a patch algebra, a commutative ring that efficiently realizes the rings in $X$ as factor rings, or even localizations, and whose structure reflects various properties of the rings in $X$. 
\end{abstract}

\subjclass{06E15, 16G30, 13F05, 13G05, 13F30}

\thanks{The first author acknowledges the financial support from the European Union's Horizon 2020 program (Marie Sklodowska-Curie grant 101021791) and the Austrian Science Fund FWF (grant DOI 10.55776/ P\-AT\-975\-6623). The second author was supported by NSF grant DMS-2231414. L.~C.~is a member of the National Group for Algebraic and Geometric Structures and their Applications (GNSAGA), a department of the Italian Mathematics Research Institute (INdAM)}

\maketitle

\tableofcontents

\section{Introduction}

All algebras considered in the paper are commutative and unital, and all algebra homomorphisms are unital. This article is motivated by a number of recent studies of spaces of rings between an integral domain $D$ and its quotient field $F$.   For example, the space in question could be that of all the valuation rings lying between $D$ and $F$, known as the Zariski-Riemann space of $F/D$ \cite{FFL,FFL3, HLOT, OZar, OIrr, Spi1, Spi2, Spi3}.  Alternatively, one might consider the space of all integrally closed rings or the space of all local rings between $D$ and $F$  \cite{Fin, FFS, FS, ONoeth}. One could also examine the set of local rings in a blow-up along a specific ideal of $D$, such as in \cite{OZar}. In all these examples, the space is a spectral space with respect to the Zariski topology. Spectral spaces are precisely the spaces that arise as the prime spectra of commutative rings, and so there is often in the study of these spaces an emphasis on realizing the space via a ring. Typically, this realization involves the prime ideals of a commutative ring.  In this article, we seek a different sort of realization, one which realizes the rings in the spectral space as factor rings (and often localizations) of a single commutative ring. For example, we exhibit a commutative ring whose localizations at the maximal ideals are precisely the valuation rings in the Zariski-Riemann space of $F/D$; see Example~\ref{example: ZS}. In working with spaces of overrings instead of  prime ideals, there is no restriction on the types of spectral spaces that can occur: For each  spectral space $X$, there is a domain $D$ such that $X$  is homeomorphic to a space of rings between $D$ and  its quotient field (see Proposition~\ref{not special}).

Exhibiting a single ring that has all the rings of a space $X$ of overrings of $D$ as factor rings is not difficult to achieve: simply take the Cartesian product of the rings. However, this approach is highly inefficient, and if the space $X$ is infinite, the product will have an abundance of extra rings as factor rings. We propose a more efficient construction that we call patch algebra. Our new approach patches together all the rings of the space $X$ into a single algebra with the property that (at least if the rings in $X$ are between a domain $D$ and its quotient field $F$) its factors modulo its minimal prime ideals are, up to isomorphism, precisely the rings in $X$. By working with the Pierce spectrum of the patch algebra rather than the minimal spectrum, we can considerably relax the restrictions on $D$ and $F$, allowing zero divisors and moving outside of the birational case. Regardless of whether the rings in the spaces we consider have zero-divisors, the patch algebra itself will have many zero-divisors, and in fact many idempotents. It is the idempotents that ultimately patch the rings in $X$ into a single ring. 

Our methods apply to spaces of subrings of a given a ring $R$, a situation that subsumes the examples mentioned above. The main topology we consider on a space of subrings of $R$ is the Zariski topology. However, we also work with the patch (or constructible) topology,  a refinement of the Zariski topology that has a basis of clopen sets. We study spaces of subrings of $R$ that are closed in the patch topology and their associated patch algebras. While these patch-closed spaces are the most directly tractable for our purposes, we introduce the notion of patch bundle to handle more general classes of spaces, including compact Hausdorff spaces of subrings and spectral spaces that are not necessarily patch-closed in the larger space of all subrings of $R$. The idea behind the patch bundle is to approximate a non-patch-closed space $X$ with a continuous map from a Stone space onto~$X$. This is reminiscent of the approach taken in condensed mathematics, where a subspace of a topological space is replaced with the set of continuous maps from a Stone space into this subspace; see \cite[Example 1.5, p.~8]{Sch}. 

Not surprisingly, our construction of a patch algebra has sheaf-theoretic aspects. We wait until the sequel of this article to fully develop this point of view, but in the present paper we do take a step in this direction by associating to each patch bundle of subrings of $R$ what we call a patch presheaf, which is a presheaf of rings on a Stone space. We show in Section~\ref{sec: 4 A category equivalence} that this association is functorial and is part of a category equivalence between patch bundles and patch presheaves. Thus, we ultimately have three ways to manifest spaces of subrings of a ring $R$: topologically as patch bundles, geometrically as patch presheaves, and algebraically as patch algebras. 

After reviewing some preliminary notions in Section~\ref{sec: 2 notation and preliminary results}, we introduce patch spaces, patch presheaves and patch bundles of subrings   in Section~\ref{sec: 3 patch spaces, presheaves, bundles}. We discuss some examples of patch spaces from the literature, and we show how the notion of patch bundle generalizes that of patch space and is versatile enough to capture compact Hausdorff spaces of rings and spectral spaces of rings. 
In Section~\ref{sec: 4 A category equivalence}, we prove that the category of patch presheaves is equivalent to that of patch bundles, thus showing that these two notions are ultimately different expressions of the same concept. Since patch bundles are more general than patch spaces, the question arises as to which patch presheaves correspond to patch spaces when viewed as subobjects in the category of patch bundles. This question is answered in Section~\ref{sec: 5 patch spaces and distinguished patch presheaves}, using what we call distinguished patch presheaves.
In Section~\ref{sec: 6 Patch algebras} we introduce patch algebras, which are algebras constructed from patch presheaves (or patch bundles), and develop their properties. We show how for a ring $R$, the space of subrings that is the image of a patch bundle is encoded into the patch algebra. In Section~\ref{sec: 7 structure of patch algebras}, we further investigate the structure of patch algebras and show that with mild restrictions on $R$ (such as that $R$ is indecomposable or a domain), the rings that are in the image of the bundle become particularly easy to extract from the patch algebra, as quotients by minimal primes or as localizations at maximal ideals. 

We leave further applications of these ideas for a sequel to this paper, in which we work out descriptions of the patch algebra for specific classes of rings.  

\section{Notation and preliminary results}\label{sec: 2 notation and preliminary results}

Throughout, $R$ will denote a commutative ring with $1$. We start the section by formalizing the notation and terminology that will be used in the paper.
Let $A$ be an $R$-algebra and let $\Id(A)$ be the set of idempotents of $A$. Since $A$ is a commutative ring with $1$, it is well known that $\Id(A)$ is a Boolean algebra with join $e \vee f = e + f -ef$, meet $e \wedge f = ef$, top element $1$ and bottom element $0$. For an element $a\in A$, we let $\ann_{R}(a)=\{r\in R : ra=0\}$ be the {\it annihilator ideal} of $a$ in $R$. The $R$-algebra $A$ is {\it idempotent generated} if it is generated as an $R$-algebra by a Boolean subalgebra $B$ of $\Id(A)$. In this case, $B$ generates $A$ both as an $R$-algebra and an $R$-module and every element of $A$ is an $R$-linear combinations of idempotents of $B$. A set $E=\{e_1, \dots, e_n\}$ of nonzero idempotents of $A$ is said to be {\it orthogonal} if $e_i\wedge e_j=0$ for every $i\ne j$. If, in addition, $\bigvee_{i=1}^n e_i=1$, then $E$ is a {\it full orthogonal} set. Accordingly, we say that $a\in A$ has an {\it orthogonal decomposition} (resp., a {\it full orthogonal decomposition}) if $a=\sum_{i=1}^n t_ie_i$, with $t_i\in R$ and $\{e_1, \dots, e_n\}$ an orthogonal set (resp., a full orthogonal set) of idempotents. By possibly adding a term with a $0$ coefficient, it is always possible to turn an orthogonal decomposition into a full orthogonal one. Recall, finally, that a ring $R$ is {\it indecomposable} if $\Id(R)=\{0,1\}$.

\subsection{Ideals of Boolean algebras}\label{subsect: ideals of Boolean algebras}  Following \cite[Chapter~18]{GiHa09}, recall that an ideal of a Boolean algebra $B$ is a subset $I\subseteq B$ such that $0\in I$, $x\vee y\in I$ for every $x,y\in I$, and $a\wedge x\in I$ for every $a\in B$ and $x\in I$. An ideal $I$ of $B$ is {\it proper} if it is properly included in $B$. For any subset $E$ of a Boolean algebra $B$, the intersection of all the ideals of $B$ containing $E$ is an ideal of $B$ called the {\it ideal generated by $E$}. We will need later the following result, that we recall without proof.

\begin{proposition}[{\cite[Theorem 11, p.~155]{GiHa09}}]\label{prop: generated ideals}
An element $e$ of a Boolean algebra is in the ideal generated by a set $E$ if and only if there is a finite subset $F$ of $E$ such that $e \le \bigvee_{f\in F} f$.
\end{proposition}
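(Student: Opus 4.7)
The plan is to show both inclusions by characterizing the ideal generated by $E$ as the collection
\[
J = \Bigl\{e \in B : \exists\, \text{finite } F \subseteq E \text{ with } e \le \bigvee_{f \in F} f\Bigr\}.
\]
First I would verify that $J$ is an ideal of $B$ containing $E$, and then argue that $J$ is minimal among ideals containing $E$, which forces $J$ to coincide with the ideal generated by $E$.

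For the first step, containment $E \subseteq J$ is immediate: given $f \in E$, take $F = \{f\}$ and use $f \le f$. To see that $J$ is an ideal, I would check the three defining properties. The element $0$ lies in $J$ because $0 \le f$ for any $f \in E$ (assuming $E$ is nonempty; if $E = \emptyset$ the ideal generated is $\{0\}$ and the claim is trivial). If $e_1, e_2 \in J$ with witnesses $F_1, F_2$, then $F_1 \cup F_2$ is a finite subset of $E$ and
\[
e_1 \vee e_2 \;\le\; \Bigl(\bigvee_{f \in F_1} f\Bigr) \vee \Bigl(\bigvee_{f \in F_2} f\Bigr) \;=\; \bigvee_{f \in F_1 \cup F_2} f,
\]
so $e_1 \vee e_2 \in J$. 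Finally, if $a \in B$ and $e \in J$ with $e \le \bigvee_{f \in F} f$, then $a \wedge e \le e \le \bigvee_{f \in F} f$, so $a \wedge e \in J$.

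For the second step, I would show that any ideal $I$ containing $E$ must contain $J$. Given $e \in J$ with witness $F = \{f_1, \dots, f_n\} \subseteq E$, each $f_i$ lies in $I$, hence $\bigvee_{i=1}^n f_i \in I$ by closure under finite joins, and then $e = e \wedge \bigvee_{i=1}^n f_i \in I$ by the absorption axiom. Combining the two steps, $J$ is the intersection of all ideals containing $E$, i.e., the ideal generated by $E$, which gives the stated equivalence.

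The argument is essentially routine once the candidate set $J$ is written down; the only point requiring a bit of care is the base case $e = 0$ and the degenerate case $E = \emptyset$, which must be handled by convention $\bigvee_{f \in \emptyset} f = 0$ (consistent with the statement if we interpret the empty join as the bottom element). No step presents a genuine obstacle.
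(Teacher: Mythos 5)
The paper does not prove this statement: it is recalled without proof as \cite[Theorem 11, p.~155]{GiHa09}, so there is no internal argument to compare against. Your proof is correct and is the standard one (essentially the argument in the cited reference): you exhibit the set $J$ of elements dominated by finite joins from $E$, check that it is an ideal containing $E$, and check that it sits inside every ideal containing $E$; the only delicate points, the cases $e=0$ and $E=\emptyset$, are handled correctly via the empty-join convention.
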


A proper ideal of a Boolean algebra $B$ is {\it maximal} if it is not properly included in any other proper ideal of $B$.
We collect in the next proposition some facts about maximal ideals of Boolean algebras that will be useful in the following discussion.

\begin{proposition}\label{prop: max ideals} Let $B$ be a Boolean algebra.
\begin{enumerate}[label=\textup{(\arabic{*})}, mode=unboxed]
    \item\label{prop: max ideals 1} A proper ideal $\m$ of $B$ is maximal if and only if, for every $e\in B$, either $e$ or its complement is in $\m$ {\cite[Lemma 1, p.~171]{GiHa09}}.

    \item\label{prop: max ideals 2} A proper ideal $\m$ of $B$ is maximal if and only if it is prime, i.e., if and only if $e\wedge f\in \m$ implies $e\in\m$ or $f\in \m$ {\cite[Corollary 1, p.~172]{GiHa09}}.

    \item\label{prop: max ideals 3} Every proper ideal in $B$ is included in a maximal ideal {\cite[Theorem 12, p.~172]{GiHa09}}.

    \item\label{prop: max ideals 4} Every ideal in $B$ is the intersection of the maximal ideals in which it is contained {\cite[Exercise 3, p.~174]{GiHa09}}.
\end{enumerate}
    
\end{proposition}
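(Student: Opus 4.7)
The plan is to establish the four parts in order, since each builds on the previous; the only nontrivial inputs needed are Proposition~\ref{prop: generated ideals} and Zorn's Lemma. Throughout I would write $e'$ for the complement of $e \in B$, so $e \vee e' = 1$ and $e \wedge e' = 0$.

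For (1), both directions go by contraposition. If $\m$ is maximal and $e \notin \m$, the ideal generated by $\m \cup \{e\}$ must be all of $B$, so by Proposition~\ref{prop: generated ideals} some $m \in \m$ satisfies $1 \le m \vee e$. Meeting both sides with $e'$ forces $e' \le m$, hence $e' \in \m$. Conversely, if every $e \in B$ satisfies $e \in \m$ or $e' \in \m$, any strictly larger proper ideal would contain both some $e \notin \m$ and its complement $e' \in \m$, forcing $1 = e \vee e'$ into that ideal, a contradiction. Part (2) then follows in a few lines: for maximality implying primality, if $e \wedge f \in \m$ and $e \notin \m$, use (1) to conclude $e' \in \m$, and then expand $f = (f \wedge e) \vee (f \wedge e')$ to see $f \in \m$. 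The reverse direction applies primality to $e \wedge e' = 0 \in \m$ and invokes (1).

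For (3), I would run the standard Zorn's Lemma argument on the poset of proper ideals containing the given ideal: unions of chains of proper ideals remain proper since none contains $1$, so chains have upper bounds. Finally, for (4), given an ideal $I$ and $e \notin I$, I would produce a maximal ideal containing $I$ but not $e$. The key check is that the ideal $J$ generated by $I \cup \{e'\}$ is proper: if $1 \in J$, then by Proposition~\ref{prop: generated ideals} some $i \in I$ would satisfy $1 \le i \vee e'$, and meeting with $e$ would force $e \le i$, contradicting $e \notin I$. Applying (3) extends $J$ to a maximal ideal $\m \supseteq I$ containing $e'$, so by (1) $e \notin \m$, witnessing that $e$ is not in the intersection. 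The main obstacle in the whole chain is really just part (1); once the clean alternation ``$e \in \m$ or $e' \in \m$'' is in hand, the remaining items reduce to short Boolean manipulations together with the usual Zorn argument.
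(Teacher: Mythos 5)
The paper does not actually prove this proposition: all four items are recalled as standard facts with pointers to Givant--Halmos, just as Proposition~\ref{prop: generated ideals} is ``recalled without proof.'' Your proposal therefore supplies something the paper deliberately omits, and it does so correctly: the arguments you give for (1)--(4) are the standard textbook ones (essentially those in the cited source), with Proposition~\ref{prop: generated ideals} correctly used to collapse the finitely many generators into a single join in both (1) and (4), and Zorn's Lemma used in the usual way for (3). The only point worth tightening is in (4): from $e' \in \mathfrak{m}$, the conclusion $e \notin \mathfrak{m}$ does not follow from item (1) alone (which only asserts that at least one of $e$, $e'$ lies in $\mathfrak{m}$), but from the properness of $\mathfrak{m}$ --- if both $e$ and $e'$ were in $\mathfrak{m}$ then $1 = e \vee e' \in \mathfrak{m}$. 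This is a one-line fix and not a gap. One could also note the degenerate case $I = B$ in (4), where the intersection is over the empty family of maximal ideals and is $B$ by convention; for proper $I$ your argument is complete.
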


\subsection{Stone spaces of Boolean algebras}\label{subsec: Stone spaces} A {\it Stone space} $X$ is a compact Hausdorff topological space that is also zero-dimensional, meaning that its clopen subsets form a basis of its topology. Equivalently, $X$ is a Stone space if it is compact, Hausdorff, and totally disconnected. For other characterizations of Stone spaces we refer the reader to \cite[Chapter II.4]{Joh86}. It is well known \cite{Sto36} that every Boolean algebra $B$ has an associated Stone space: the set  ${\rm Max}(B)$ of  maximal ideals of $B$, whose topology is generated by the sets
$$\{\m\in {\rm Max}(B) : e\not\in \m\},$$
where $e$ is an element of the Boolean algebra $B$. The above sets are clopen in ${\rm Max}(B)$ and they form a Boolean algebra with respect to set containment. We now state a fundamental result for Stone spaces of Boolean algebras, originally proved by Stone in \cite{Sto36} and commonly known as {\it Stone's representation theorem}. We will extensively use this result throughout the paper, and for the proof, we refer to \cite{Sto36} or \cite[Chapter II.4]{Joh86}. 

\begin{theorem}[Stone's representation theorem]\label{thm: Stone's repr. thm}
 Every Boolean algebra $B$ is isomorphic to the algebra of clopen subsets of its Stone space ${\rm Max}(B)$. The isomorphism associates to every $e\in B$ the clopen subset $\{\m\in {\rm Max}(B) : e\not\in \m\}\subseteq {\rm Max}(B).$ Furthermore, every Stone space $X$ is homeomorphic to the Stone space ${\rm Max}({\rm Clop}(X))$ associated to the Boolean algebra of clopen subsets of $X$. The homemorphism associates to each element $x\in X$ the maximal ideal $\m=\{U\in {\rm Clop}(X) : x\not \in U\}$ of the Boolean algebra ${\rm Clop}(X)$.
\end{theorem}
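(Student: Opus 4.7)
The plan is to construct both maps explicitly and verify their properties using Propositions~\ref{prop: generated ideals} and~\ref{prop: max ideals}. For the first half, define $\varphi \colon B \to \mathrm{Clop}(\mathrm{Max}(B))$ by $\varphi(e) = U_e := \{\m \in \mathrm{Max}(B) : e \notin \m\}$. The Boolean-homomorphism properties are immediate: $\varphi(0) = \emptyset$ and $\varphi(1) = \mathrm{Max}(B)$; $\varphi(e \vee f) = \varphi(e) \cup \varphi(f)$ because $e,f \in \m$ forces $e \vee f \in \m$ and, conversely, $e \vee f \in \m$ forces $e = e \wedge (e \vee f) \in \m$ (and likewise for $f$); $\varphi(e \wedge f) = \varphi(e) \cap \varphi(f)$ is the primality in Proposition~\ref{prop: max ideals}\ref{prop: max ideals 2}; and $\varphi(e^c)$ is the set-theoretic complement of $\varphi(e)$ by Proposition~\ref{prop: max ideals}\ref{prop: max ideals 1}. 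For injectivity it suffices to show $U_a \neq \emptyset$ whenever $a \neq 0$: the principal ideal $\{x \in B : x \le a^c\}$ is proper since $a \not\le a^c$, so by Proposition~\ref{prop: max ideals}\ref{prop: max ideals 3} it extends to some $\m \in \mathrm{Max}(B)$ containing $a^c$, and $a \in \m$ would force $1 = a \vee a^c \in \m$.

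Next I would verify that $\mathrm{Max}(B)$ is a Stone space and that $\varphi$ is surjective. Hausdorffness follows from Proposition~\ref{prop: max ideals}\ref{prop: max ideals 1}: for distinct $\m \neq \n$, some $e \in \m \setminus \n$ produces disjoint basic clopens $U_{e^c}$ and $U_e$ separating them. Compactness is the key point: any basic open cover $\{U_{e_i}\}_{i\in I}$ of $\mathrm{Max}(B)$ yields an ideal generated by $\{e_i\}$ contained in no maximal ideal, so by Proposition~\ref{prop: max ideals}\ref{prop: max ideals 3} this ideal equals $B$ and in particular contains $1$; Proposition~\ref{prop: generated ideals} then produces a finite $F \subseteq I$ with $\bigvee_{i\in F} e_i = 1$, hence a finite subcover. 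Surjectivity of $\varphi$ follows: any clopen $U \subseteq \mathrm{Max}(B)$ is a union of basic clopens $U_{e_i}$, and compactness reduces this to a finite union of the form $U_{\bigvee_i e_i}$.

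The second half is structurally identical. Define $\psi \colon X \to \mathrm{Max}(\mathrm{Clop}(X))$ by $\psi(x) = \m_x := \{U \in \mathrm{Clop}(X) : x \notin U\}$; that $\m_x$ is a maximal ideal follows from Proposition~\ref{prop: max ideals}\ref{prop: max ideals 1} applied to $U$ and $X \setminus U$. Continuity is immediate from $\psi^{-1}(\{\m : U \notin \m\}) = U$, and injectivity follows from the zero-dimensionality and Hausdorffness of $X$. For surjectivity, given $\m \in \mathrm{Max}(\mathrm{Clop}(X))$, the family $\{V \in \mathrm{Clop}(X) : V \notin \m\}$ has the finite intersection property because $\m$ is prime (Proposition~\ref{prop: max ideals}\ref{prop: max ideals 2}) and contains $\emptyset$; compactness of $X$ then produces $x$ in its intersection, and $\m \subseteq \m_x$ (since any $V \in \m$ has $V^c \notin \m$, whence $x \in V^c$), giving $\m = \m_x$ by maximality of $\m$. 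As $\psi$ is a continuous bijection from a compact space onto a Hausdorff space, it is a homeomorphism. The main obstacles are the two compactness arguments underlying surjectivity; everything else is routine unpacking of definitions.
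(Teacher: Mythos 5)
The paper does not prove this theorem: it is quoted as background and the proof is delegated to the cited references (\cite{Sto36} and \cite[Chapter II.4]{Joh86}), so there is no internal argument to compare yours against. Your proof is the standard one found in those references and it is correct: the verification that $\varphi$ is a Boolean homomorphism, the use of Proposition~\ref{prop: max ideals}\ref{prop: max ideals 3} to get nontriviality of $U_a$ for $a\neq 0$, the compactness argument via Proposition~\ref{prop: generated ideals}, and the finite-intersection-property argument for surjectivity of $\psi$ are all sound and correctly keyed to the facts the paper records in Section~\ref{subsect: ideals of Boolean algebras}. Two small points you gloss over but should state: to conclude that $\mathrm{Max}(B)$ is a Stone space you also need zero-dimensionality, which is immediate since each $U_e$ is clopen (its complement is $U_{e^c}$) and these form a basis; and the reduction of injectivity of the Boolean homomorphism $\varphi$ to triviality of its kernel deserves a one-line justification (if $\varphi(a)=\varphi(b)$ then $\varphi$ kills the symmetric difference of $a$ and $b$). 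Neither is a gap of substance.
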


\subsection{Specker algebras}\label{subsec: Specker algebra} There is a natural way to associate to a ring $R$ and a Boolean algebra $B$ an $R$-algebra whose elements are $R$-linear combinations of idempotents corresponding to the elements of $B$. The lemma below is grounded in Section 2 of \cite{BMMO}. 

\begin{lemma}\label{lem: 1.1}
Let $R$ be a ring, and $B$ a Boolean algebra. 
There is an $R$-algebra $R[B]$ and an injective homomorphism of Boolean algebras $\alpha:B \rightarrow \Id(R[B])$ such that:
\begin{enumerate}[label=\textup{(\roman{*})}, mode=unboxed]
    \item\label{lem: 1.1 i} for each $ e \in B$ and $t\in R$,  $t\alpha(e) = 0$ if and only if $t=0$ or $e=0$, and
    \item\label{lem: 1.1 ii} every element of $R[B]$ is of the form $ t_1\alpha(e_1) + \cdots + t_n\alpha(e_n)$, where each $e_i \in B$ and each $t_i \in R$.
\end{enumerate}
 \end{lemma}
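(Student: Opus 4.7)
The plan is to realize $R[B]$ concretely as a subring of a function ring on the Stone space of $B$, using Stone's representation theorem (Theorem~\ref{thm: Stone's repr. thm}) to guarantee faithfulness. Set $X = \Max(B)$ and, for each $e \in B$, let $U_e = \{\m \in X : e \notin \m\}$, so that $e \mapsto U_e$ is a Boolean isomorphism from $B$ onto ${\rm Clop}(X)$.

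Consider the $R$-algebra $R^X$ of all functions $X \to R$ with pointwise operations. For each $e \in B$, let $\alpha(e) = \chi_{U_e} \in R^X$ be the characteristic function of $U_e$, and define $R[B]$ as the $R$-subalgebra of $R^X$ generated by $\{\alpha(e) : e \in B\}$. Each $\chi_{U_e}$ is an idempotent of $R^X$, and the standard identities $\chi_{U \cap V} = \chi_U \chi_V$, $\chi_{U \cup V} = \chi_U + \chi_V - \chi_U \chi_V$, $\chi_\emptyset = 0$, and $\chi_X = 1$ show that $\alpha$ is a homomorphism of Boolean algebras into $\Id(R[B])$.

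For~\ref{lem: 1.1 i}, suppose $t \alpha(e) = 0$ in $R^X$, i.e., $t\chi_{U_e}$ vanishes at every point of $X$. If $e \neq 0$, then by Stone's theorem $U_e \neq \emptyset$, so evaluating at any $\m \in U_e$ forces $t = 0$; conversely, if $t = 0$ or $e = 0$ (in which case $U_e = \emptyset$ and $\chi_{U_e} = 0$), then certainly $t\alpha(e) = 0$. Injectivity of $\alpha$ follows at once: if $\alpha(e) = \alpha(f)$ then $U_e = U_f$, and the bijectivity of the Stone correspondence gives $e = f$. For~\ref{lem: 1.1 ii}, I would observe that the set $S = \{\sum_{i=1}^n t_i\alpha(e_i) : n \geq 0,\ t_i \in R,\ e_i \in B\}$ is closed under addition by inspection, closed under multiplication via $\alpha(e_i)\alpha(e_j) = \alpha(e_i \wedge e_j)$, and contains $1 = \alpha(1)$. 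Hence $S$ is an $R$-subalgebra of $R^X$ containing all the generators of $R[B]$, so $S = R[B]$.

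The main technical point is~\ref{lem: 1.1 i}: this property would be delicate in a purely free construction obtained by imposing Boolean relations on polynomial indeterminates over $R$, where one has to argue that no unwanted collapses occur. By working inside $R^X$ and invoking Stone's theorem to ensure $U_e \neq \emptyset$ whenever $e \neq 0$, this faithfulness comes essentially for free, and the entire proof reduces to routine pointwise calculations with characteristic functions.
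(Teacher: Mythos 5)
Your proof is correct, but it takes a genuinely different route from the paper's. The paper does not construct $R[B]$ at all: it defines $R[B]$ as in \cite[Definition 2.4]{BMMO} --- a quotient of a polynomial ring $R[\{x_e : e\in B\}]$ by the ideal imposing the Boolean relations --- and cites \cite[Lemma 2.6]{BMMO} for properties \ref{lem: 1.1 i} and \ref{lem: 1.1 ii}. You instead build the algebra concretely inside $R^{\Max(B)}$ as the $R$-span of the characteristic functions $\chi_{U_e}$ of the clopen sets $U_e$; this is (isomorphic to) the Boolean power of $R$ by $B$, which \cite{BMMO} shows is isomorphic to their $R[B]$, so the two models agree and everything the paper later needs (faithfulness, the Specker property, unique full orthogonal form) holds for yours. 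What your approach buys is exactly what you point out: faithfulness in \ref{lem: 1.1 i} is immediate because Stone's theorem guarantees $U_e\neq\emptyset$ whenever $e\neq 0$, whereas in the free construction one must separately argue that the defining ideal causes no unwanted collapse --- that is the content of the cited lemma in \cite{BMMO}. What the citation-based construction buys is the universal mapping property of $R[B]$, which is needed elsewhere in \cite{BMMO} but not for this lemma. One caveat, which applies equally to the paper's version: if $R$ is the zero ring then every $\chi_{U_e}$ is the zero function and $\alpha$ cannot be injective for nontrivial $B$, so both arguments tacitly assume $1\neq 0$ in $R$.
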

\begin{proof}
    If we define $R[B]$ as in \cite[Def.~2.4]{BMMO}, \ref{lem: 1.1 i} and \ref{lem: 1.1 ii} follow from \cite[Lemma~2.6]{BMMO}.
\end{proof}

We can identify $B$ with its image in $R[B]$ and write the elements of $R[B]$ as $t_1e_1 + \cdots + t_ne_n$, where $e_i\in B$ and $t_i\in R$ for every $i$. Standard Boolean algebra arguments show that every element $a$ in $R[B]$ has an orthogonal decomposition, that is, $a =t_1e_1 +\cdots +t_ne_n$, where the $e_i$ are pairwise orthogonal idempotents in $B$ and the $t_i$ are in $R$. Following the terminology in \cite{BMMO}, statement \ref{lem: 1.1 i} of Lemma \ref{lem: 1.1} asserts that each nonzero idempotent $e \in B$ is {\it faithful}, meaning that $\ann_R(e)=\{0\}$. We then recover from \cite[Lemma 2.1]{BMMO} that every element $a\in R[B]$ can be written {\it uniquely} in {\it full orthogonal form} as
\begin{equation*}
    a=\sum_{i=1}^n t_i e_i, \text{ where the }t_i\in R\text{ are distinct and }e_i\in B.
\end{equation*}

\begin{definition}[{\cite[Definition 2.3]{BMMO}}] An $R$-algebra $A$ is a {\it Specker algebra} if there is a Boolean subalgebra $B$ of $\Id(A)$ whose nonzero elements are faithful and generate $A$ as an $R$-algebra.
\end{definition} 

An $R$-algebra $A$ is a Specker algebra if and only if $A$ is isomorphic to $R[B]$ for some Boolean algebra $B$ \cite[Theorem 2.7]{BMMO}. 
If $R$ is a domain, then a Specker algebra is simply a torsion-free idempotent generated $R$-algebra. For these and a number of other characterizations of Specker algebras, see \cite[Section 2]{BMMO}. 

 \section{Patch spaces, patch presheaves and patch bundles}\label{sec: 3 patch spaces, presheaves, bundles}

Our focus throughout the paper is on collections of subrings of a fixed ring $R$. 
The set $\Sigma(R)$ of all subrings of a ring $R$ admits a natural topology, the {\it Zariski topology}, which has as a basis of open sets the sets of the form $${\mathcal{U}}(r_1,\ldots,r_n) := \{S \in \Sigma(R):r_1,\ldots,r_n \in S\}.$$
We write $\Sigma(R)^{\rm zar}$ when we wish to work with the Zariski topology on  $\Sigma(R)$. The space $\Sigma(R)^{\rm zar}$
is a {\it spectral space}, that is, $\Sigma(R)^{\rm zar}$ is a quasicompact\footnote{To align with the usual terminology  of commutative algebra and algebraic geometry, we say  that a space  in which every open cover has a finite subcover is {\it quasicompact}.  We reserve the term ``compact'' for settings in which we have also the Hausdorff property, but even in those cases, to align with the terminology from general topology, we will be explicit and write ``compact Hausdorff'' rather than simple ``compact.''} $T_0$ space for which the collection of basic open sets  ${\mathcal{U}}(r_1,\ldots,r_n)$ is closed under finite intersections, and every nonempty irreducible closed subset of $\Sigma(R)^{\rm zar}$ has a  generic point \cite[II.3.4]{Joh86}. We will refer to a subset $X$ of $\Sigma(R)^{\rm zar}$ equipped with the subspace topology induced by the Zariski topology as a subspace of (the topological space) $\Sigma(R)^{\rm zar}$.

\subsection{Patch spaces}\label{subsection: patch spaces}

Besides the Zariski topology, we can endow the set $\Sigma(R)$ of subrings of a ring $R$ with a finer topology, the {\it patch topology}. The patch topology on $\Sigma(R)$ has as its basis the sets in $\Sigma(R)^{\rm zar}$ that are an intersection of a quasicompact open set and a complement of a quasicompact open set. A quasicompact open set in $\Sigma(R)^{\rm zar}$ is a finite union of sets of the form ${\cal U}(r_1,\ldots,r_n)$, where $r_1,\ldots,r_n \in R$, and a complement of such a set is a finite union of sets of the form  $${\cal V}(r):=\{S \in \Sigma(R) :r  \not \in S\}, {\mbox{ where }} r \in R.$$ It follows that the patch topology of $\Sigma(R)^{\rm zar}$ has a basis of clopen sets of the form
\begin{equation}\label{eq: clopens}
{\cal U}(r_1,\ldots,r_n)\text{ and }
{\cal U}(r_1,\ldots,r_n) \cap {\cal V}(r)\text{, where }r_1,\ldots,r_n,r \in R.  
\end{equation}
In the patch topology, $\Sigma(R)$ is a Stone space and the designated basis is precisely the set of clopens in \eqref{eq: clopens} (see \cite[Proposition II.4.5]{Joh86}). We denote by ${\Sigma}(R)^{\rm patch}$ the set $\Sigma(R)$ with the patch topology. Thus when we refer to a subspace $X$ of ${\Sigma}(R)^{\rm patch}$ we mean a subset $X$ of $\Sigma(R)^{\rm patch}$ equipped with the subspace topology induced by the patch topology. 

A subspace $X$ of $\Sigma(R)^{\rm zar}$ or of ${\Sigma}(R)^{\rm patch}$ is said to be closed, open or clopen if it exhibits the respective property (closed, open or clopen)~within the subspace topology. An exception to this is that of a spectral subspace. 
Following \cite{ST}, a subspace $X$ of $\Sigma(R)^{\rm zar}$ is a {\it spectral subspace}   if  $X$ is a spectral space and  
the inclusion map $X\hookrightarrow \Sigma(R)^{\rm zar}$ is a spectral map with respect to the Zariski topology, i.e., the preimage of every quasicompact open set of $\Sigma(R)^{\rm zar}$ is quasicompact. (Spectral subspaces are called spectral subobjects in \cite{Hochster}.) Since the quasicompact open subsets of $\Sigma(R)^{\rm zar}$ are finite unions of the sets of the form ${\mathcal{U}}(r_1,\ldots,r_n)$, where $r_1,\ldots,r_n \in R$, a subspace $X$ of $\Sigma(R)^{\rm zar}$ is spectral if and only if the open subsets of $X$  of the form  $X \cap {\mathcal{U}}(r_1,\ldots,r_n)$ are quasicompact.

We specialize to our context and rephrase in our terminology an observation from Hochster \cite[p.~45]{Hochster} that will be useful in the following.

\begin{proposition}\label{prop: Hochster}
A subset of $\Sigma(R)$ is a spectral subspace of $\Sigma(R)^{\rm zar}$ if and only if it is a closed subspace of ${\Sigma}(R)^{\rm patch}$. 
\end{proposition}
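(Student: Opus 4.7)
The plan is to exploit that $\Sigma(R)^{\rm patch}$ is a Stone space, in particular compact Hausdorff, and to verify the two implications by comparing the intrinsic patch topology of a subspace $X \subseteq \Sigma(R)^{\rm zar}$ with the topology it inherits from $\Sigma(R)^{\rm patch}$.

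For the direction $(\Rightarrow)$, I would begin with a spectral subspace $X$ of $\Sigma(R)^{\rm zar}$. The spectral-map hypothesis forces the quasicompact open subsets of $X$ to be exactly the traces $X \cap \mathcal{U}(r_1,\ldots,r_n)$, so the basis for the intrinsic patch topology on $X$ consists of the sets $X \cap (\mathcal{U}(r_1,\ldots,r_n) \cap \mathcal{V}(r))$, which coincides with the subspace patch basis coming from $\Sigma(R)^{\rm patch}$. Since $X$ is spectral, its intrinsic patch topology is compact Hausdorff, so $X$ sits inside $\Sigma(R)^{\rm patch}$ as a compact subspace of a Hausdorff space, and is therefore closed.

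For the direction $(\Leftarrow)$, I would assume $X$ is closed in $\Sigma(R)^{\rm patch}$, so that $X$ with the subspace patch topology is a closed subspace of a Stone space, hence itself a Stone space. Each basic Zariski open $X \cap \mathcal{U}(r_1,\ldots,r_n)$ is the intersection of $X$ with a clopen of $\Sigma(R)^{\rm patch}$ listed in \eqref{eq: clopens}, so it is closed in the compact space $X^{\rm patch}$ and therefore compact there; quasicompactness is preserved under passage to the coarser Zariski subspace topology, which yields the spectral-map condition. It then remains to verify that the Zariski subspace topology on $X$ is itself spectral. The space is $T_0$ as a subspace of a $T_0$ space, and the sets $X \cap \mathcal{U}(r_1,\ldots,r_n)$ furnish a basis of quasicompact opens closed under finite intersection, so the only nontrivial axiom is sobriety.

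The sobriety check is the step I expect to be the main obstacle. Given an irreducible Zariski-closed $C \subseteq X$, I would let $D$ be its Zariski-closure in $\Sigma(R)$; a routine argument using $C = X \cap D$ shows that $D$ is irreducible in $\Sigma(R)^{\rm zar}$, so sobriety of the ambient spectral space supplies a generic point $y \in D$. The key claim is that $y$ lies in $C$, which I would establish by placing $y$ in the patch closure of $C$: a basic patch neighborhood of $y$ in $\Sigma(R)^{\rm patch}$ has the form $\mathcal{U}_1 \cap \mathcal{U}_2^c$ with $y \in \mathcal{U}_1$ and $y \notin \mathcal{U}_2$, which forces $D \cap \mathcal{U}_2 = \emptyset$ (since $\mathcal{U}_2^c$ is Zariski-closed and contains the generic point $y$ of $D$), hence $C \subseteq \mathcal{U}_2^c$, while $\mathcal{U}_1 \cap C \neq \emptyset$ since $C$ is Zariski-dense in $D$ and $\mathcal{U}_1$ meets $D$ at $y$. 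Since $C = X \cap D$ is patch-closed in $\Sigma(R)$ as the intersection of the patch-closed set $X$ with the Zariski-closed (hence patch-closed) set $D$, its patch closure is itself, so $y \in C$, giving the required generic point and completing the proof.
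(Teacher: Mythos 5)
Your argument is correct. Note, however, that the paper does not actually prove this proposition: it is stated as a rephrasing of an observation of Hochster \cite[p.~45]{Hochster} (see also the treatment of spectral subspaces in \cite{ST}), so there is no in-paper proof to compare against. What you have supplied is a self-contained verification, and all the essential points are in place: in the forward direction, the spectral-subspace hypothesis identifies the quasicompact opens of $X$ with the finite unions of traces $X \cap {\mathcal U}(r_1,\ldots,r_n)$ (your phrasing ``exactly the traces'' is slightly off --- they are the \emph{finite unions} of such traces --- but this does not affect the conclusion that the intrinsic patch topology of $X$ agrees with the topology inherited from $\Sigma(R)^{\rm patch}$, whence $X$ is compact in a Hausdorff space and therefore closed). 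In the backward direction you correctly reduce everything to sobriety, and the sobriety argument is sound: $C = X \cap D$ with $D$ the irreducible Zariski closure, the generic point $y$ of $D$ lies in the patch closure of $C$ because every basic patch neighborhood ${\mathcal U}_1 \cap {\mathcal U}_2^{\,c}$ of $y$ satisfies $C \subseteq {\mathcal U}_2^{\,c}$ (as ${\mathcal U}_2^{\,c}$ is Zariski closed and contains the generic point of $D$) and meets $C$ inside ${\mathcal U}_1$ by density of $C$ in $D$; since $C$ is patch closed, $y \in C$. This is essentially the standard proof of Hochster's characterization of spectral subobjects, specialized to $\Sigma(R)$, and it would serve as a complete proof were one to be included.
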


The closed subspaces of $\Sigma(R)^{\rm patch}$ will be particularly important in what follows, and so we single them out with the following definition.

\begin{definition}\label{dfn: patch space} 
A {\it patch space} for $R$ is a closed subspace of the space ${\Sigma}(R)^{\rm patch}$ of the subrings of $R$ with the patch topology. 
\end{definition}
A subspace $X$ of ${\Sigma}(R)^{\rm patch}$ is thus considered a patch space if and only if a subring $S$ of $R$ belongs to $X$ whenever the following condition holds: for all
$s_1,\ldots,s_n \in S$, there exists a ring $T$ in $X$ such that $s_1,\ldots,s_n \in T$, and for all 
$s_1,\ldots,s_n \in S$ and $s \in R\setminus S$, there exists a ring $T$ in $X$ such that $s_1,\ldots,s_n \in T$ and $s \in R\setminus T$.

\medskip

Since a subspace of a topological space is dense in its closure, every subspace $X$ of ${\Sigma}(R)^{\rm patch}$ lies dense in a patch space: simply take the patch closure of $X$ in ${\Sigma}(R)^{\rm patch}$. 
However, patch closures can often be difficult to calculate and the rings that are limit points of $X$ may not preserve properties that rings in $X$ have in common. There are, though, important examples of patch spaces of rings, a few of which we list in Example~\ref{example: patch spaces} in order to motivate the concept. Approaches such as in \cite{Fin}, \cite{FFS} and \cite{OIrr} suggest that there will be many variations on these examples. 

\begin{example} \label{example: patch spaces}
Let $R$ be a ring, and let $S$ be a subring of $R$. 

\begin{enumerate}[label=\textup{(\arabic{*})}, mode=unboxed]
\item  (Finocchiaro \cite[Proposition~3.5]{Fin}) The set of all rings between $S$ and $R$ is a patch space for $R$.

\item (Finocchiaro \cite[Proposition 3.6]{Fin})  The set of all rings between  $R$ and $S$ that are integrally closed in $R$ is a patch space for $R$. 

\item (Finocchiaro-Fontana-Spirito \cite[Corollary~2.14]{FFS}) If $R$ is a field, then the set of all local rings between $S$ and $R$ is a patch space for $R$.

\item  If $R$ is a field and $S$ is a local subring of $R$, then the set of all local rings between $S$ and $R$ that dominate $S$ is a patch space for $R$. This is because the map $\phi$ from the space of local rings between $S$ and $R$ to $\Spec(S)$ that sends each local ring  to the contraction of its maximal in $S$ is a continuous map, and so the fiber of this map over the maximal ideal $S$ (which is a  closed point in the Zariski topology of $\Spec(S)$) is a Zariski-closed, hence patch-closed, subset of the patch space of local rings between $S$ and $R$. Thus the set of local rings between $S$ and $R$ dominating $S$ is a patch space. 

\item If $R$ is a field, then the set of all valuation rings between $S$ and $R$ with the Zariski topology is a patch space for $R$, known as the  {\it Zariski-Riemann space} of the extension $R/S$. That this is so has been proved by several authors; see \cite[Example 2.2(8)]{OIrr} for a discussion.  

\item Suppose $R$ is a field, $S$ is a Noetherian subring of $R$ and  $t_1,\ldots,t_n$ are nonzero elements of $R$. For each $i$, define $S_i = S[t_1/t_i,\ldots,t_n/t_i]$ and let $X_i$ be the set of localizations of $S_i$ at its prime ideals.  Each $X_i$ is a patch space. This follows from Proposition~\ref{prop: Hochster} because $X_i$  is spectral (see \cite[Proposition~3.1]{OZar}) and, as follows from the discussion of spectral subspaces above, the inclusion map of $X_i$ into the spectral space consisting  of the local subrings of $R$ is a spectral map since the assumption that $S$, and hence $S_i$, is Noetherian, implies $X_i$ is a Noetherian space and hence  every open set in $X_i$ is quasicompact. Thus $X=X_1 \cup \cdots \cup X_n$ is closed in ${\Sigma}(R)^{\rm patch}$. The space $X$ is the set of stalks of the projective integral scheme over $\Spec(S)$ that is defined by the $t_1,\ldots,t_n$  (cf.~\cite[Remark~2.1]{OGeom}).  More generally, every projective integral scheme arises this way and hence the set of stalks of such a scheme, when viewed as subrings of the function field of the scheme, is a patch space for the function field. 
\end{enumerate}
\end{example}

\subsection{Patch presheaves}\label{subsec: patch presheaves}

It will be useful in later sections to consider intersections of rings in  special subspaces $X$ of $\Sigma(R)^{\rm zar}$. We achieve this by employing a natural choice of presheaf on $X$, which is defined subsequently. Additionally, we introduce a collection of these objects, denoted as ${\cal R}(X)$, serving as an example of a patch presheaf (refer to Definition~\ref{dfn: patch presheaf}). The algebraic properties of ${\cal R}(X)$ will play a pivotal role in the next sections.

\begin{definition}\label{dfn: R(X)} Let $X$ be a subspace of ${\Sigma}(R)^{\rm patch}$. Define a map $\O$ from the set of (patch) open sets of $X$ to ${\Sigma}(R)^{\rm patch}$ by assigning $\O_{X}(\emptyset) = R$, and for each nonempty open set $U$ of $X$, assigning $$\O_{X}(U) = \bigcap_{S\in U}S.$$ 
The collection of all such intersections over clopen sets of $X$ is denoted by $$
{\mathcal R}(X)  :=  \{\O_X(U):U {\mbox{ is clopen in }X}\}.$$   
\end{definition}

Since any subspace of a zero-dimensional topological space is zero-dimensional, the subspace $X$ of ${\Sigma}(R)^{\rm patch}$ has a basis of clopen sets. These clopen sets form a Boolean algebra in the natural way, with union as join and intersection as meet. Thus ${\cal R}(X)$ is indexed by a Boolean algebra, and $\O_X(U \cup V) = \O_X(U) \cap \O_X(V)$ for all clopen sets $U$ and $V$ in $X$.  Abstracting these properties from ${\cal R}(X)$  suggests the following definition.

\begin{definition}\label{dfn: patch presheaf} A {\it patch presheaf} $({\cal R}, B)$ for a ring $R$ is a collection ${\cal R}=\{R_e:e \in B\}$  of subrings of $R$ indexed by a Boolean algebra $B$ such that $R=R_0$ (with $0$ the bottom element of $B$), $R_e \cap R_f = R_{e \vee f}$ for all $e,f \in B$, and so also $R_f \subseteq R_e$ whenever $e \leq f$. \end{definition}

The reason for  the adjective ``patch'' here, in spite of the fact that there is not a patch-closed subset of $\Sigma(R)$ that plays a role in the definition, will be clearer in the next subsection, after the notion of patch bundle is introduced. 
 The reason for the terminology ``presheaf''  is that  ${\cal R}$ can be viewed as a presheaf on the Stone space $\Max(B)$ of the Boolean algebra $B$. However, for the sake of simplifying the algebraic presentation later, we choose to regard ${\cal R}$ as a ``presheaf on $B$'' rather than $\Max(B)$. More precisely, we represent the rings of sections of ${\cal R}$ by indexed subrings $R_e$ of $R$. Each such ring $R_e$ corresponds to the ring of sections of a presheaf defined on the clopen set $\{\m \in \Max(B):e \not \in \m\}$.  This representation can be extended from clopen sets of $\Max(B)$ to all open sets of $\Max(B)$ in the standard way by defining the ring of sections over an arbitrary open set $U$ as the inverse limit of the $R_e$, for each $e \in B$ such that  $\{\m \in \Max(B):e \not \in \m\} $ is a clopen set of $\Max(B)$ contained in $U$. Through this process, ${\cal R}$ can be reinterpreted as a presheaf on $\Max(B)$, hence the terminology ``patch presheaf''.

\begin{remark} \label{orthogonal is enough}
An {\it a priori} weaker condition suffices to define a patch presheaf. If in the definition we replace ``$R_e \cap R_f = R_{e \vee f}$ for all $e,f \in B$'' with the requirement ``$R_{e} \cap R_f = R_{e \vee f}$ for all orthogonal pairs $e,f \in B$'', then  $({\cal R}, B)$ is still a patch presheaf. To see this, suppose $e,f \in B$ and let $\bar{e},\bar{f}\in B$ be their complements (i.e., $e\vee \bar{e}=f\vee \bar{f}=1$ and $e\wedge \bar{e}=f\wedge\bar{f}=0$). Since $B$ is a Boolean algebra, $\bar{e}$ and $\bar{f}$ always exist and we can write $$e \vee f = e' \vee (e \wedge f) \vee f',$$ where $e'=e\wedge \bar{f}$ and $f'=f\wedge \bar{e}$ satisfy the following identities:
\begin{center} $ e' \vee (e \wedge f) =e$, $e' \wedge (e \wedge f) = 0$,    $ f' \vee (e \wedge f) =f$, $f' \wedge (e \wedge f) = 0$.
\end{center}
Since $e',e \wedge f, f'$ are pairwise orthogonal elements of $B$, we have by assumption that $$R_{e \vee f} = R_{ e' \vee (e \wedge f) \vee f'} = R_{e'} \cap R_{e \wedge f} \cap R_{f'} = R_{e' \vee (e \wedge f)} \cap R_{(e \wedge f)\vee f'}=R_e \cap R_f.$$
Thus, when verifying whether $({\cal R}, B)$ is a patch presheaf, it is enough to consider orthogonal elements of $B$.
 \end{remark} 

 \begin{example}\label{ex: patch presheaf}
 In the notation of Definition~\ref{dfn: R(X)}, if $X$ is a subspace of ${\Sigma}(R)^{\rm patch}$,  $$({\cal R}(X), {\rm Clop}(X))$$ is a patch presheaf for $R$.  In fact, the zero element of the Boolean algebra of clopens of $X$ is the empty set, and $\O_X(\emptyset) = R$.
 \end{example}

\begin{definition}\label{dfn: stalks} The {\it stalks} of a patch presheaf $({\cal R}, B)$ for $R$ are the rings ${\cal R}_{\m} = \bigcup_{e \not \in {\ff m}} R_e$, where ${\mathfrak{m}}$ is a maximal ideal in $B$. The collection of all stalks is denoted by 
$${X}({\mathcal R})  :=   \{{\cal R}_{\m}:\m \in \Max(B)\}.$$
\end{definition}
The fact that ${\cal R}_\m$ is a ring for every maximal ideal $\m$ of $B$ follows from the fact that, for every $e,f\in B$, $R_e$ and $R_f$ are subrings of $R_{e\wedge f}$ and $e\wedge f\not\in \m$ if and only if $e\not \in \m$ and $f\not \in \m$ (see Proposition~\ref{prop: max ideals}\ref{prop: max ideals 2}).

The next lemma shows that the rings $R_e$ in a patch presheaf can be recovered from their stalks.

\begin{lemma}\label{stalks lemma} Let $({\cal R}, B)$ be a patch presheaf for a ring $R$. Then, for every $e\in B$, \[R_e = \bigcap_{ {\mathfrak{m}} \not \ni e}{\cal R}_\m,\] where ${\mathfrak{m}}$ ranges over the maximal ideals of $B$ that do not contain $e$.
\end{lemma}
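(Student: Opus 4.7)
The plan is to prove the two inclusions separately. The containment $R_e \subseteq \bigcap_{\m \not\ni e} {\cal R}_\m$ is immediate from Definition~\ref{dfn: stalks}: if $x \in R_e$ and $\m \in \Max(B)$ satisfies $e \notin \m$, then $x$ lies in $\bigcup_{f \notin \m} R_f = {\cal R}_\m$.

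For the reverse inclusion, I would invoke the compactness of the Stone space $\Max(B)$ to extract a finite subcover, and then apply the defining identity $R_g \cap R_h = R_{g \vee h}$ of a patch presheaf. Fix $x$ in the intersection. For each maximal ideal $\m$ of $B$ with $e \notin \m$, choose $f_\m \in B$ with $f_\m \notin \m$ and $x \in R_{f_\m}$. Set $V_g := \{{\mathfrak n} \in \Max(B) : g \notin {\mathfrak n}\}$ for each $g \in B$; by Stone's representation (Theorem~\ref{thm: Stone's repr. thm}), each $V_g$ is clopen in $\Max(B)$ and the assignment $g \mapsto V_g$ is a Boolean algebra isomorphism onto ${\rm Clop}(\Max(B))$. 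By construction $V_e \subseteq \bigcup_\m V_{f_\m}$, and since $V_e$ is a closed subset of the compact space $\Max(B)$, it is itself compact, so there exist finitely many maximal ideals $\m_1, \ldots, \m_n$ with $V_e \subseteq V_{f_{\m_1}} \cup \cdots \cup V_{f_{\m_n}}$.

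The key observation is that this topological consequence of compactness is enough to force $x \in R_e$. Setting $f := f_{\m_1} \vee \cdots \vee f_{\m_n}$, the Boolean algebra isomorphism gives $V_{f_{\m_1}} \cup \cdots \cup V_{f_{\m_n}} = V_f$, hence $V_e \subseteq V_f$, and transporting back to $B$ yields $e \le f$. The patch presheaf axiom then gives $R_f \subseteq R_e$, while iterating $R_g \cap R_h = R_{g \vee h}$ produces $R_f = \bigcap_{i=1}^n R_{f_{\m_i}}$, a ring that contains $x$ by choice of the $f_{\m_i}$. Therefore $x \in R_f \subseteq R_e$.

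The main obstacle is recognizing that the combinatorial relation $V_e \subseteq V_f$ obtained from the compactness argument translates, via Stone duality, into the algebraic relation $e \le f$ in $B$, which is precisely what triggers the correct containment $R_f \subseteq R_e$ in the patch presheaf. The degenerate case $V_e = \emptyset$ (which forces $e = 0$ by Proposition~\ref{prop: max ideals}\ref{prop: max ideals 4}) corresponds to an empty intersection, consistently equal to $R_0 = R$, so no separate argument is needed there.
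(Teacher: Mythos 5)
Your proof is correct. Both inclusions are handled soundly: the forward one is immediate, and for the reverse one your chain $V_e\subseteq V_f$ $\Rightarrow$ $e\le f$ $\Rightarrow$ $R_f\subseteq R_e$ together with $R_f=\bigcap_i R_{f_{\m_i}}\ni x$ is exactly what is needed, and the degenerate case $e=0$ is correctly dispatched. The route differs from the paper's in packaging rather than in substance. The paper never passes to the topology of $\Max(B)$: it forms the ideal $I$ of $B$ generated by the elements $e\wedge f_\m$, shows $I=eB$ by comparing the maximal ideals containing each (Proposition~\ref{prop: max ideals}\ref{prop: max ideals 4}), and then invokes Proposition~\ref{prop: generated ideals} to extract finitely many indices with $e\le (e\wedge f_{\m_1})\vee\cdots\vee(e\wedge f_{\m_n})$; the conclusion then follows from $R_{f_{\m_i}}\subseteq R_{e\wedge f_{\m_i}}$ and the presheaf axiom. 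You instead obtain the finite subfamily by covering the clopen (hence compact) set $V_e$ in the Stone space and transporting $V_e\subseteq V_{f_{\m_1}}\cup\cdots\cup V_{f_{\m_n}}$ back to $e\le f_{\m_1}\vee\cdots\vee f_{\m_n}$ via the order-reflecting isomorphism of Theorem~\ref{thm: Stone's repr. thm}. These are dual manifestations of the same finiteness principle---Proposition~\ref{prop: generated ideals} combined with Proposition~\ref{prop: max ideals}\ref{prop: max ideals 4} is precisely the algebraic shadow of compactness of $\Max(B)$. Your version is perhaps more geometrically transparent but leans on Stone duality as a black box; the paper's is self-contained within the Boolean-algebra preliminaries it has already set up and avoids introducing the topology of $\Max(B)$ into the argument.
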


\begin{proof} 
It is clear that $R_e \subseteq \bigcap_{ {\mathfrak{m}} \not \ni e}{\cal R}_\m$. To prove the reverse inclusion, let $t \in \bigcap_{ {\mathfrak{m}} \not \ni e}{\cal R}_\m$.
For each ${\mathfrak{m}}$ not containing $e$, there is $f_{\mathfrak{m}} \in B \setminus {\mathfrak{m}}$ such that $t \in R_{f_{\mathfrak{m}}}$. Consider the ideal $I$ of $B$ generated by the set of elements  $e \wedge f_{\mathfrak{m}}$ such that ${\mathfrak{m}}$ does not contain $e$. We claim $I = eB$. 
Since $B$ is a Boolean algebra, it suffices to show that $I$ and $eB$ are contained in the same maximal ideals of $B$ (see Proposition~\ref{prop: max ideals}\ref{prop: max ideals 4}).  
If ${\mathfrak{n}}$ is a maximal ideal  of $B$ not containing $e$, then $f_{\mathfrak{n}} \not \in {\mathfrak{n}}$ so that $I \not \subseteq {\mathfrak{n}}$.  
On the other hand, if ${\mathfrak{n}}$ is a maximal ideal  of $B$ not containing $I$, then there is a maximal ideal ${\mathfrak{m}}$ not containing $e$ such that $e \wedge f_{\mathfrak{m}} \not \in {\mathfrak{n}}$, and hence $e \not \in {\mathfrak{n}}.$ Therefore, $I = eB$. It then follows from Proposition~\ref{prop: generated ideals}, that there are maximal ideals ${\mathfrak{m}}_1,\ldots,{\mathfrak{m}}_n$ in $B$ that do not contain $e$ and for which $$e \le (e \wedge f_{{\mathfrak{m}}_1}) \vee \cdots \vee (e \wedge f_{{\mathfrak{m}}_n}).$$ Since $({\cal R}, B)$ is a patch presheaf, $$ R_{e \wedge f_{{\mathfrak{m}}_1}} \cap  \cdots \cap R_{e \wedge f_{{\mathfrak{m}}_n}}\subseteq R_e,$$
and so since $t \in R_{f_{{\mathfrak{m}}_i}} \subseteq R_{e \wedge f_{{\mathfrak{m}}_i}}$ for each $i$, we conclude $t \in R_e$, which proves the lemma. 
\end{proof} 

We conclude this subsection by defining a morphism between two patch presheaves for $R$.

\begin{definition}\label{dfn: patch presheaf morphism}
Let $({\cal R}_1=\{R_e : e\in B_1\}, B_1)$ and $({\cal R}_2=\{S_f : f\in B_2\}, B_2)$ be two patch presheaves for the same ring $R$. A {\it morphism of patch presheaves} from $({\cal R}_1, B_1)$ to $({\cal R}_2, B_2)$ consists of a morphism of Boolean algebras $h: B_2\rightarrow B_1$, together with inclusions of rings $S_f\subseteq  R_{h(f)}$, with $f\in B_2$. Note that for every $f'\le f\in B_2$ the following diagram commutes:
\begin{center}
\begin{tikzcd}
S_{f'} \arrow[r, "\subseteq"]
& R_{h(f')} \\
S_f \arrow[u, "\subseteq"] \arrow[r, "\subseteq" ]
& \arrow["\subseteq", u] R_{h(f)} 
\end{tikzcd}
\end{center}
\end{definition} 

With the above definition of morphism, it is easy to check that patch presheaves for a ring $R$ form a category. We will denote the category of patch presheaves for a ring $R$ as {\bf PPresheaf}$_R$, and we will further discuss it in Section~\ref{sec: 4 A category equivalence}.

\subsection{Patch bundles}\label{subsec: patch bundles}

 One of our main goals is to study topologically interesting collections of subrings of a ring $R$. This includes patch spaces of subrings, as defined in subsection~\ref{subsection: patch spaces}, which are Stone spaces. Our methods work  
 also for  a more general class of subspaces of subrings of $R$, namely, those spaces $X$  that are a continuous image of a Stone space $Y$. Thus, if $f:Y \rightarrow X$ is a continuous map, $f$ can be viewed as introducing a Stone space into the picture when there is  not one present, and it allows us to work not just with patch spaces but quotient spaces of Stone spaces. 

To express this we use the language of bundles, where, following \cite[p.~170]{Joh86}, a {\it bundle} over a topological space $X$ is a continuous map $f:Y \rightarrow X$ of topological spaces.

\begin{definition} 
 If $R$ is a ring and $f:Y \rightarrow \Sigma(R)^{\rm zar}$ is a continuous map from a Stone space~$Y$ to the space $\Sigma(R)^{\rm zar}$ of subrings of $R$ (with the Zariski topology), then $f$ is said to be a {\it patch bundle over $\Sigma(R)^{\rm zar}$} or, for short, a {\it patch bundle over $R$}. We define a {\it morphism of patch bundles} from $f_1:Y_1 \rightarrow \Sigma(R)^{\rm zar}$ to $f_2:Y_2 \rightarrow \Sigma(R)^{\rm zar}$ as a continuous map $g:Y_1 \rightarrow Y_2$ such that $(f_2 \circ g) (y_1)\subseteq f_1(y_1)$ for every $y_1\in Y_1$. With this definition of morphism, patch bundles over a ring $R$ form a category that we will denote as ${\bf PBundle}_R$.
\end{definition}

Each object $f: Y\rightarrow \Sigma(R)^{\rm zar}$ in ${\bf PBundle}_R$ picks out both a Stone space $Y$ as well as a space $X$ of subrings of $R$ (the image of $Y$ through $f$), for which $Y$ works as a parameterization space. With a slight abuse of notation we will sometimes identify $f$ with the surjective (continuous) map $Y \rightarrow X$. By Stone duality for Boolean algebras, $Y$ can enrich the Boolean algebra of clopens of $X$, which may be very few, with its own abundance of clopens, as $Y$ is a Stone space.  The need for such an enrichment arises from the fact that $X$ may not be patch-closed in ${\Sigma}(R)$, and hence $X$ will be less directly amenable to  the algebraic constructions of the patch algebras defined later.\footnote{Although not necessary for what follows, a geometric explanation for this move from $X$ to a patch bundle $Y \rightarrow X$ is that the ring of global sections of the sheafification of the patch presheaf induced by the bundle will be the patch algebra of Section~\ref{sec: 6 Patch algebras}. This point of view will be explored in a sequel to this paper.} 

As an image of a Stone space under a continuous function, the image of a patch bundle is quasicompact. However, $X$ need not be a Stone space, as a quotient space of a Stone space is not necessarily a Stone space. This is clear also in light of the following proposition.

\begin{proposition} \label{prop: KHaus} 
A subspace of $\Sigma(R)^{\rm zar}$ is the image of a patch bundle if and only if it is the image of a compact Hausdorff space under a continuous function. 
\end{proposition}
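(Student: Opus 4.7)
The plan is to prove the equivalence by noting that one direction is essentially a definition unpacking and the other direction requires finding a Stone space that continuously surjects onto any given compact Hausdorff space.

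For the forward direction, suppose $X \subseteq \Sigma(R)^{\rm zar}$ is the image of a patch bundle $f:Y \to \Sigma(R)^{\rm zar}$. Every Stone space is, by definition, compact Hausdorff (and zero-dimensional), so $Y$ itself is a compact Hausdorff space mapping continuously into $\Sigma(R)^{\rm zar}$ with image $X$. This direction is immediate once the definitions have been recalled.

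For the reverse direction, suppose $X$ is the image of a continuous map $g:K \to \Sigma(R)^{\rm zar}$ from a compact Hausdorff space $K$. The key claim is that every compact Hausdorff space is the continuous image of some Stone space; once this is established, composing such a surjection $Y \twoheadrightarrow K$ with $g$ yields a patch bundle $Y \to \Sigma(R)^{\rm zar}$ whose image equals $X$. I would produce the Stone space via the Stone-\v{C}ech compactification: write $K_d$ for the underlying set of $K$ endowed with the discrete topology, and set $Y:=\beta K_d$. Since $K_d$ is discrete, $\beta K_d$ is a compact Hausdorff space in which the closures $\overline{A}$ of subsets $A \subseteq K_d$ form a basis of clopen sets, so $\beta K_d$ is zero-dimensional and hence a Stone space. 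The identity map $K_d \to K$ is continuous (any map from a discrete space is), and since $K$ is compact Hausdorff the universal property of Stone-\v{C}ech compactification extends this to a continuous map $\pi:\beta K_d \to K$. Because the image $\pi(\beta K_d)$ is compact (hence closed in $K$) and contains the dense subset $\pi(K_d) = K_d = K$ (as a set), $\pi$ is surjective. Then $f := g \circ \pi : \beta K_d \to \Sigma(R)^{\rm zar}$ is continuous from a Stone space, so it is a patch bundle, and its image equals $g(K) = X$.

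The only nontrivial ingredient is the fact that $\beta K_d$ is a Stone space (or, equivalently, that every compact Hausdorff space is a continuous image of a Stone space — a standard result, also attainable via Gleason's projective cover). The main ``obstacle,'' such as there is one, is simply choosing an appropriate covering Stone space; the Stone-\v{C}ech compactification of the discretization is the most direct choice and avoids invoking the heavier machinery of projective covers. Everything else in the argument is a direct application of the definitions of patch bundle and of compact Hausdorff space.
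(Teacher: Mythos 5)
Your argument is correct and follows essentially the same route as the paper: the forward direction is immediate since Stone spaces are compact Hausdorff, and the reverse direction uses the Stone--\v{C}ech compactification of the discretization $\beta K_d$ together with the universal property to produce a Stone space surjecting continuously onto $K$. Your added details (the surjectivity of $\pi$ via compactness of the image, and the composition $g \circ \pi$) are sound and merely make explicit what the paper leaves implicit.
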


\begin{proof} 
This follows from the  fact that every compact Hausdorff space $X$ is the continuous image of a Stone space.  Specifically, if $X_{disc}$ is the set $X$ with the discrete topology, then the identity map $X_{disc} \rightarrow X$ is continuous, and by the universal mapping property of the Stone-\v{C}ech compactification, this map lifts to a continuous map from the Stone-\v{C}ech compactification $\beta X_{disc}$ of $X_{disc}$ to $X$. Since $X_{disc}$ is discrete, $\beta X_{disc}$ is  a Stone space; see for example  \cite[\href{https://stacks.math.columbia.edu/tag/090C}{Tag 090C}]{stacks-project}. 
\end{proof}

We construct next an example of a patch bundle whose image is not a patch space, thus showing that patch bundles are a more general concept than that of patch spaces. To do so, we first observe in the next proposition that there is nothing special about the spectral spaces that can be realized as spaces of subrings of a ring.

\begin{proposition} \label{not special} For every spectral space $X$, there is a domain $D$  such that $X$ is homeomorphic to a space of rings (with the Zariski topology) between $D$ and its quotient field.  
\end{proposition}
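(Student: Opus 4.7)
The plan is to realize $X$ as $\Spec(D)$ for a suitable domain $D$, and then to observe that the canonical localization map embeds $\Spec(D)$ homeomorphically into the space of overrings of $D$ in its quotient field. By Hochster's theorem together with its refinement asserting that every spectral space is homeomorphic to the prime spectrum of a B\'ezout (or more generally, Pr\"ufer) domain $D$ with quotient field $F$, I may assume $X \cong \Spec(D)$ for such a $D$.

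With $X$ so represented, I would consider the map $\iota \colon \Spec(D) \to \Sigma(F/D)^{\rm zar}$ defined by $\iota(\p) = D_\p$. This is injective for any domain: $\p \in \Spec(D)$ is recovered from $D_\p$ as the set of elements of $D$ that fail to be units in $D_\p$. To check continuity, I compute the preimage of a basic open set:
\[
\iota^{-1}(\mathcal{U}(r_1, \ldots, r_n)) = \bigcap_{i=1}^n \{\p \in \Spec(D) : (D :_D r_i) \not\subseteq \p\},
\]
which is a finite intersection of principal opens in $\Spec(D)$. For the open-map property onto the image, for any $f \in D \setminus \{0\}$ one has $f \notin \p$ if and only if $1/f \in D_\p$, so
\[
\iota(\{\p \in \Spec(D) : f \notin \p\}) = \mathcal{U}(1/f) \cap \iota(\Spec(D)),
\]
which is open in the subspace topology on $\iota(\Spec(D))$. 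Combining these, $\iota$ restricts to a homeomorphism from $X \cong \Spec(D)$ onto the subspace $\{D_\p : \p \in \Spec(D)\} \subseteq \Sigma(F/D)^{\rm zar}$, yielding the desired space of rings between $D$ and $F$.

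The main obstacle I anticipate is the realization step: asserting that every spectral space is the prime spectrum of a (B\'ezout or Pr\"ufer) domain, rather than merely of an arbitrary commutative ring. If a clean classical citation is inconvenient, an alternative is to apply Hochster's theorem to write $X \cong \Spec(R)$ for some ring $R$ and then perform a pullback or ``$D+M$''-style construction on $R$ to manufacture a domain whose spectrum still recovers $X$; this trades a cited theorem for more bookkeeping. Once the realization is in hand, the topological verification that $\iota$ is a homeomorphism onto its image is routine from the definitions of the Zariski topology on $\Sigma(F/D)$ and on $\Spec(D)$.
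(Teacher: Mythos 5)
Your proof is correct, but it takes a genuinely different route from the paper. The paper applies Hochster's theorem to get a ring $A$ with $\Spec(A)\cong X$, writes $A$ as a quotient of a polynomial ring $D$ over $\mathbb{Z}$ (in possibly infinitely many variables), so that $X$ is homeomorphic to a \emph{closed subset} $X'$ of $\Spec(D)$, and then takes the space of rings to be $\{D_P : P\in X'\}$, citing the classical Zariski--Samuel lemma that $P\mapsto D_P$ is a homeomorphism onto its image in $\Sigma(F)^{\rm zar}$. You instead invoke the stronger realization theorem that every spectral space is the prime spectrum of a B\'ezout domain $D$ and take the space of \emph{all} localizations $\{D_{\mathfrak{p}} : \mathfrak{p}\in\Spec(D)\}$. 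That refinement is a genuine theorem (it appears in the Dickmann--Schwartz--Tressl book on spectral spaces, which the paper already cites), so your main anticipated obstacle is not actually an obstacle, and your vaguer pullback fallback is unnecessary; still, the paper's device of passing to a closed subspace of the spectrum of a polynomial ring is lighter, needing only Hochster's theorem in its basic form. Your hands-on verification that $\iota:\mathfrak{p}\mapsto D_{\mathfrak{p}}$ is a homeomorphism onto its image is essentially a re-proof of the Zariski--Samuel lemma and is sound, with one small inaccuracy: the set $\{\mathfrak{p} : (D:_D r_i)\not\subseteq\mathfrak{p}\}$ is the open set determined by the conductor ideal $(D:_D r_i)$, which need not be principal, so it is a union of principal opens rather than a principal open itself; openness, and hence continuity, holds all the same.
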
 

\begin{proof} Let $X$ be a spectral space. By Hochster's theorem \cite{Hochster}, there is a commutative ring $A$ such that $X$ is homeomorphic to $\Spec(A)$.  Let $D$ be a polynomial ring in possibly infinitely many variables over the ring of integers such that  $A$ is a homomorphic image of $D$. Let $F$ be the quotient field of $D$.  Since $A$ is an image of $D$, $X$ is homeomorphic to a closed set $X'$ of $\Spec(D)$, and so the subspace $\{D_P:P \in X'\}$  of $\Sigma(F)^{\rm zar}$, which is homeomorphic to $X'$ \cite[Lemma~1, p.~116]{ZS},  is therefore homeomorphic to $X$. 
\end{proof}

\begin{example} Let $X$ be a spectral space, and let $Y$ be a subspace of $X$ that is a spectral space but not patch-closed in $X$. Such an example can be found in \cite[Example 2.1.2, p.~49]{ST}. By Proposition~\ref{not special}, we can assume $Y$ and $X$ are subspaces of $\Sigma(F)^{\rm zar}$ for some field $F$.  Since $Y$ is not patch-closed in $X$, $Y$ is not patch-closed in the larger space $\Sigma(F)^{\rm zar}$. Let $Y_P$ denote $Y$ with the patch topology. Since $Y$ is spectral, $Y_P$ is a Stone space. Also, the identity map $Y_P \rightarrow Y$ is continuous since the patch topology is finer than the Zariski topology, so this map is a patch bundle whose image is not a patch space. 
\end{example}

\begin{example} One way to construct patch bundles whose image is a patch space is from homomorphisms of the Boolean algebra of clopens of $\Sigma(R)^{\rm patch}$ into arbitrary Boolean algebras.  
If $\alpha:{\rm Clop}({\Sigma}(R)^{\rm patch}) \rightarrow B$ is any homomorphism of Boolean algebras, then the map $\Max(B) \rightarrow {\Sigma}(R)^{\rm patch}$ that is defined as the composition of the 
dual map $$\Max(B) \rightarrow \Max({\rm Clop}({\Sigma}(R)^{\rm patch}))$$ with the homeomorphism $$\Max({\rm Clop}({\Sigma}(R)^{\rm patch})) \rightarrow {\Sigma}(R)^{\rm patch}$$ is a patch bundle with the additional property that the mapping defining the bundle is continuous with respect to the patch topology on $\Sigma(R)$, not just the Zariski topology. Since $\Max(B)$ is quasicompact and $\Sigma(R)^{\rm patch}$ is Hausdorff, the Closed Map Lemma implies the image of the bundle $\Max(B) \rightarrow \Sigma(R)^{\rm patch}$ is closed, and hence is a patch space.   
\end{example}

It would be interesting however to have a more transparent description of the subspaces of $\Sigma(R)^{\rm zar}$ that are the images of patch bundles, or,  equivalently, images of compact Hausdorff spaces (Proposition~\ref{prop: KHaus}).

\section{A category equivalence}\label{sec: 4 A category equivalence}

The dichotomy of patch bundles vs.~patch presheaves 
is essentially  that of  stalks vs.~rings of sections for ringed spaces, and just as in some cases it is easier to work with stalks in \'{e}tale spaces rather than rings of sections over open sets, or vice versa, the concepts of patch bundle and patch presheaf afford a similar flexibility. To make this relationship precise, we show in this section that the categories {\bf PBundle}$_R$ of patch bundles and {\bf PPresheaf}$_R$ of patch presheaves over the same ring $R$ are equivalent. Observe first that 
each patch bundle over $\Sigma(R)^{\rm zar}$ gives rise to a patch presheaf in a natural way:

\begin{definition}\label{dfn: patch presheaf of a patch bundle} Let $f:Y \rightarrow \Sigma(R)^{\rm zar}$ be a patch bundle, so that $f(Y)$ is a collection of subrings of $R$.  For each nonempty $U \in $ Clop$(Y)$, define 
\begin{center} $R_U = $  the intersection of the rings in $f(U),$  
\end{center}
where $R_\emptyset = R$. The  
 {\it patch presheaf for} $f$ is defined by 
\begin{center}
${\bf R}(f) = \left(
\{R_U:U \in {\rm Clop}(Y)\},  {\rm Clop}(Y)\right)$.
\end{center}
\end{definition}

This assignment of a patch presheaf via ${\bf R}$ to a patch bundle is functorial, as we will show, and it is part of a category equivalence. To prove this we require two lemmas, the first of which shows that ${\bf R}(f)$ encodes the rings in the image of a patch bundle $f$ as the stalks of the presheaf ${\bf R}(f)$.

\begin{lemma} \label{lem: bundle and stalks}
The image of a patch bundle $f:Y \rightarrow \Sigma(R)^{\rm zar}$  is the set of stalks of the patch presheaf ${\bf R}(f)$.    
\end{lemma}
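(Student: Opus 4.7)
The plan is to exploit Stone duality to identify points of $Y$ with maximal ideals of ${\rm Clop}(Y)$, and then show the stalk at each such maximal ideal equals the ring at the corresponding point.

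First I would recall from Theorem~\ref{thm: Stone's repr. thm} that, since $Y$ is a Stone space, each point $y \in Y$ gives rise to a maximal ideal $\m_y = \{U \in {\rm Clop}(Y) : y \not\in U\}$ of the Boolean algebra $B = {\rm Clop}(Y)$, and every maximal ideal of $B$ arises this way. Thus the stalks of ${\bf R}(f)$ are exactly
\[
\mathcal{R}_{\m_y} \;=\; \bigcup_{\substack{U \in {\rm Clop}(Y) \\ y \in U}} R_U \;=\; \bigcup_{y\in U} \bigcap_{z \in U} f(z),
\]
indexed by $y \in Y$. The task then reduces to showing $\mathcal{R}_{\m_y} = f(y)$ for every $y \in Y$.

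The inclusion $\mathcal{R}_{\m_y} \subseteq f(y)$ is essentially immediate: any $t \in R_U$ with $y \in U$ lies in $\bigcap_{z \in U} f(z)$, and in particular in $f(y)$. For the reverse inclusion, I would take $t \in f(y)$ and use the Zariski continuity of the bundle map. Since $\mathcal{U}(t) = \{S \in \Sigma(R) : t \in S\}$ is a basic Zariski-open set containing $f(y)$, its preimage $f^{-1}(\mathcal{U}(t))$ is an open neighborhood of $y$ in $Y$. Because $Y$ is a Stone space, its clopens form a basis, so there is a clopen $U \in {\rm Clop}(Y)$ with $y \in U \subseteq f^{-1}(\mathcal{U}(t))$. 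Then $t \in f(z)$ for every $z \in U$, whence $t \in R_U \subseteq \mathcal{R}_{\m_y}$.

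This gives $f(Y) = X({\bf R}(f))$. The main obstacle, such as it is, is the reverse inclusion, which is the only place where the hypotheses on the bundle (Zariski continuity of $f$ and zero-dimensionality of $Y$) are genuinely used; the rest is purely formal unpacking of the definitions of ${\bf R}(f)$ and of stalks via Stone duality.
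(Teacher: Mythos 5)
Your proof is correct and follows essentially the same route as the paper: the easy inclusion $\mathcal{R}_{\m_y}\subseteq f(y)$ by unpacking definitions, and the reverse inclusion via Zariski continuity of $f$ together with the clopen basis of the Stone space $Y$ to find $U$ with $y\in U\subseteq f^{-1}(\mathcal{U}(t))$. Your organization (using Stone duality up front to parametrize all maximal ideals by points of $Y$ and then proving the single identity $\mathcal{R}_{\m_y}=f(y)$) is a slightly tidier packaging of the paper's two-inclusion argument, but the substance is identical.
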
 

\begin{proof}  
Let $f:Y \rightarrow \Sigma(R)^{\rm zar}$ be a patch bundle and let $X:=f(Y)$ be the  image of $Y$. Let $S \in X$, and choose $y \in Y$ such that $f(y) = S$. By Theorem~\ref{thm: Stone's repr. thm}, $\m=\{U\in {\rm Clop}(Y):y \not \in U\}$ is a maximal ideal of Clop$(Y)$ and the stalk $\bigcup_{U \not \in \m} R_U$  of the patch presheaf ${\bf R}(f)$ at $\m$ satisfies the following inclusion: 
$$ \bigcup_{U \not \in \m} R_U = \bigcup_{U\ni y} R_U \subseteq f(y).$$
In fact, for every $U\in {\rm Clop}(Y)$ containing $y$, $S=f(y)\in f(U)$, and so $$R_U=\bigcap_{T\in f(U)}T\subseteq S.$$
On the other hand, let $t \in f(y)$. Then $f(y) \in {\cal U}(t)$, where $$\mathcal{U}(t)=\{T\in \Sigma(R)^{\rm zar} \, \colon \, t\in T\}$$ is open in $\Sigma(R)^{\rm zar}$. So $y \in f^{-1}({\cal U}(t))$. Since $Y$ is a Stone space and $f^{-1}({\cal U}(t))$ is an open neighborhood of $y$ in  $Y$, there is a clopen $U$ with $y \in U \subseteq f^{-1}({\cal U}(t))$, and so $f(y) \in f(U) \subseteq {\cal U}(t)$. Thus $t \in \bigcup_{U\ni y}R_U$, which proves $S=f(y)$ is the stalk of ${\bf R}(f)$ at $\m$, and so $X\subseteq X({\bf R}(f))$. On the other hand, let $S$ be the stalk of ${\bf R}(f)$ at a maximal ideal $\m$ of ${\rm Clop}(Y)$, i.e., $S\in X({\bf R}(f))$. By Stone duality, $\m=\{U\in {\rm Clop}(Y):y \not \in U\}$ for some $y\in Y$. The above argument shows then that $S=f(y)\in X$, proving that $X = X({\bf R}(f))$. \end{proof}    

Before proving the next lemma, let us recall from \cite[II.3.5]{Joh86} and Section~\ref{subsec: Stone spaces} that for any Boolean algebra $B$ the maximal spectrum $\Max(B)$ of $B$, whose points are the maximal ideals of $B$, is a Stone space under the patch topology or, equivalently, the Zariski topology. The clopen sets of $\Max(B)$ are of the form $\{\m \in \Max(B): e \not \in \m\}$, where $e\in B$. 

\begin{lemma}\label{lem: continous surjection} If $({\cal R}, B)$ is a patch presheaf for a ring $R$, then the map  $$\phi:\Max(B) \rightarrow X({\mathcal R}):{\mathfrak{m}} \mapsto {\mathcal R}_\m$$ is a surjection that is continuous in the Zariski topology and thus is a patch bundle. 
\end{lemma}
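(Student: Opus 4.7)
The plan is to separate the statement into its two assertions: surjectivity of $\phi$ and continuity of $\phi$ with respect to the Zariski topology on $\Sigma(R)^{\rm zar}$. The claim that $\phi$ is then a patch bundle will follow from the definition once we know $\phi$ is a continuous map from a Stone space into $\Sigma(R)^{\rm zar}$, since $\Max(B)$ is a Stone space by Theorem~\ref{thm: Stone's repr. thm}. Surjectivity is essentially a matter of unwinding notation: by Definition~\ref{dfn: stalks}, the set $X(\mathcal R)$ is defined as $\{\mathcal R_{\mathfrak m} : {\mathfrak m} \in \Max(B)\}$, so every element of $X(\mathcal R)$ is $\phi({\mathfrak m})$ for some ${\mathfrak m}$.

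The substantive part is continuity. Recall from Section~\ref{subsection: patch spaces} that the Zariski topology on $\Sigma(R)$ has a subbasis consisting of the sets $\mathcal U(r) = \{S \in \Sigma(R) : r \in S\}$ for $r \in R$, since $\mathcal U(r_1,\dots,r_n) = \mathcal U(r_1) \cap \cdots \cap \mathcal U(r_n)$. Hence it suffices to show that $\phi^{-1}(\mathcal U(r))$ is open in $\Max(B)$ for each $r \in R$. The key computation is
\[
\phi^{-1}(\mathcal U(r)) = \{{\mathfrak m} \in \Max(B) : r \in \mathcal R_{\mathfrak m}\}.
\]
By Definition~\ref{dfn: stalks}, $r \in \mathcal R_{\mathfrak m}$ if and only if there exists $e \in B \setminus {\mathfrak m}$ with $r \in R_e$. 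Writing $E_r = \{e \in B : r \in R_e\}$, this gives
\[
\phi^{-1}(\mathcal U(r)) = \bigcup_{e \in E_r} \{{\mathfrak m} \in \Max(B) : e \notin {\mathfrak m}\},
\]
which is a union of basic clopen sets of $\Max(B)$ and hence open. This is really the only nontrivial step, though it is not so much an obstacle as a bookkeeping exercise once one observes that the stalk condition translates directly into the subbasic clopens defining the topology on $\Max(B)$.

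Finally, to conclude that $\phi$ is a patch bundle in the sense of Section~\ref{subsec: patch bundles}, note that $\phi$ factors as the continuous surjection $\Max(B) \to X(\mathcal R)$ followed by the (continuous) inclusion $X(\mathcal R) \hookrightarrow \Sigma(R)^{\rm zar}$, and $\Max(B)$ is a Stone space. This matches the definition of a patch bundle verbatim.
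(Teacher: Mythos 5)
Your proof is correct and follows essentially the same route as the paper: surjectivity is definitional, and continuity is checked by computing the preimage of a (sub)basic Zariski-open set as a union of the clopen sets $\{\mathfrak m \in \Max(B) : e \notin \mathfrak m\}$. The only cosmetic difference is that you reduce to the subbasic sets $\mathcal U(r)$ for a single $r$, whereas the paper handles $\mathcal U(r_1,\ldots,r_n)$ directly by invoking the directedness of the union $\mathcal R_{\mathfrak m} = \bigcup_{e \notin \mathfrak m} R_e$; both are valid.
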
 

\begin{proof} It is clear that the map is surjective. To see that it is continuous,
let $t_1,\ldots,t_n \in R$. Using the fact that ${\mathcal{R}}_{\m}$ is a directed union of the $R_e$ with $e \not \in \m$, we have 
\begin{eqnarray*}
\phi^{-1}(X({\cal R}) \cap {\cal U}(t_1,\ldots,t_n) ) & = & 
\{\m \in \Max(B):t_1,\ldots,t_n \in {\cal R}_{\m}\} \\
& = & 
\{\m \in \Max(B):t_1,\ldots,t_n \in R_e {\mbox{ for some }} e \not \in \m\} \\
& = & 
\bigcup_{e\in E} 
\{\m \in \Max(B): e \not \in \m\},
\end{eqnarray*}
where $E:=\{e\in B \, \colon \,  t_1,\dots, t_n\in R_e\}$. As a union of open sets, this last set is open, and so $$\phi^{-1}(X({\cal R}) \cap {\cal U}(t_1,\ldots,t_n))$$ is open, which proves that $\phi$ is continuous in the Zariski topology.  
\end{proof}

In light of the lemma, we can then associate a patch bundle to each patch presheaf. 

\begin{definition}\label{dfn: patch bundle of a presheaf}
Let $({\cal R},B)$ be a patch presheaf for  $R$.  The {\it patch bundle} of  $({\cal R},B)$, denoted ${\bf B}({\cal R},B)$,  is the bundle $\phi:\Max(B) \rightarrow X({\cal R})$ of Lemma~\ref{lem: continous surjection}.  
\end{definition}

Let ${\bf PPresheaf}_R$ be the category of patch presheaves for $R$, where the morphisms are those defined in Definition~\ref{dfn: patch presheaf morphism}. 
We extend the definition of ${\bf B}$ on objects of ${\bf PPresheaf}_R$ to that of a functor
$${\bf B}:{\bf PPresheaf}_R \rightarrow {\bf PBundle}_R.$$  This functor sends a morphism $$({\cal R}_1=\{R_e : e\in B_1\},B_1) \rightarrow ({\cal R}_2=\{S_f : f\in B_2\},B_2)$$ of patch presheaves to a morphism of patch bundles $\phi_1\to \phi_2$, where $\phi_i={\bf B}({\cal R}_i,B_i)$, for $i=1,2$. Such morphism of patch bundles is defined by the dual map $h^*: \Max(B_1) \rightarrow \Max(B_2)$ of the Boolean algebra morphism $h: B_2 \to B_1$ specified by the morphism of patch presheaves, inducing the inclusion $S_f\subseteq R_{h(f)}$ for every $f\in B_2$. 
In fact, \[\phi_2(h^*(\m_1))=\phi_2(h^{-1}(\m_1))=(\cal{R}_2)_{h^{-1}(\m_1)}=\bigcup_{f\not \in h^{-1}(\m_1)}S_f,\] while \[\phi_1(\m_1)=\bigcup_{e\not \in \m_1}R_e.\] Thus, if $t\in \phi_2(h^*(\m_1))$, then $t\in S_f\subseteq R_{h(f)}$ for some $f\not\in h^{-1}(\m_1)$. Since $h(f)\not \in \m_1$, then $t\in \phi_1(\m_1)$, which verifies that $\phi_2(h^*(\m_1)) \subseteq \phi_1(\m_1)$ and hence that $h^*$ gives a morphism of the patch bundles $\phi_1$ and $\phi_2$.

We also extend ${\bf R}$, already defined on objects, to a functor $${\bf R}: {\bf PBundle}_R  \rightarrow {\bf PPresheaf}_R.$$
To define ${\bf R}$ on morphisms in ${\bf PBundle}_R$, let $g:Y_1 \rightarrow Y_2$ be a continuous map defining a morphism between the patch bundles $$
f_1:Y_1  \rightarrow X_1 \: {\mbox{ and }} \: f_2 : Y_2  \rightarrow X_2.
$$
Namely, $f_2(g(y_1))\subseteq f_1(y_1)$ for every $y_1\in Y_1$. Set 
\begin{eqnarray*}
{\bf R}(f_1) & = & \left(
\{R_U:U \in {\rm Clop}(Y_1)\},  {\rm Clop}(Y_1)\right) \\ {\bf R}(f_2) & = & \left(
\{S_V:V \in {\rm Clop}(Y_2)\},  {\rm Clop}(Y_2)\right).
\end{eqnarray*}
The dual map of $g$ given by
$$g^*:{\rm Clop}(Y_2) \rightarrow {\rm Clop}(Y_1):V \mapsto g^{-1}(V)$$ is a homomorphism of Boolean algebras. We claim that, for each $V \in {\rm Clop}(Y_2)$, $S_V\subseteq R_{g^*(V)}=R_{g^{-1}(V)}$, i.e., that $g^*$ defines a morphism of patch presheaves from ${\bf R}(f_1)$ to ${\bf R}(f_2)$. This follows from Lemmas~\ref{stalks lemma} and~\ref{lem: bundle and stalks}, the defining property of $g$ and the observation that $\{v\in V: v\in g(U)\}\subseteq V$:
\[ S_V=\bigcap_{v\in V}f_2(v)\subseteq \bigcap_{u\in g^{-1}(V)} f_2(g(u))\subseteq \bigcap_{u\in g^{-1}(V)} f_1(u)=R_{g^*(V)}.\]

It is routine to check that both ${\bf B}$ and ${\bf R}$ preserve composition of morphisms.

\begin{theorem}\label{thm: cat equiv} The functors ${\bf R}$ and ${\bf B}$ define an equivalence between the
category ${\bf PPresheaf}_R$ of patch presheaves for $R$ and the category ${\bf PBundle}_R$ of patch bundles over $R$.  
\end{theorem}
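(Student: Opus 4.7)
The plan is to exhibit natural isomorphisms $\eta: \mathrm{id}_{{\bf PPresheaf}_R} \Rightarrow {\bf R}\circ{\bf B}$ and $\varepsilon: {\bf B}\circ{\bf R} \Rightarrow \mathrm{id}_{{\bf PBundle}_R}$, with Stone duality (Theorem~\ref{thm: Stone's repr. thm}) supplying the underlying isomorphisms of Boolean algebras and of spaces, and Lemmas~\ref{stalks lemma} and~\ref{lem: bundle and stalks} supplying the ring-level identifications that make the components of $\eta$ and $\varepsilon$ into isomorphisms rather than mere morphisms.

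For $\eta$, I would fix a patch presheaf $({\cal R}, B)=(\{R_e\}_{e\in B}, B)$. The composite ${\bf R}({\bf B}({\cal R}, B))$ is the patch presheaf indexed by ${\rm Clop}(\Max(B))$ whose ring over a clopen $U$ is $\bigcap_{\m \in U}{\cal R}_\m$. Stone duality furnishes a Boolean algebra isomorphism $\sigma_B: B \to {\rm Clop}(\Max(B))$ with $\sigma_B(e) = U_e := \{\m : e\not\in\m\}$. I would then take $\eta_{({\cal R},B)}$ to be the morphism of patch presheaves specified by $\sigma_B^{-1}: {\rm Clop}(\Max(B)) \to B$ together with the inclusions $\bigcap_{\m \in U_e}{\cal R}_\m \subseteq R_{\sigma_B^{-1}(U_e)} = R_e$. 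Lemma~\ref{stalks lemma} upgrades every such inclusion to an equality, and since $\sigma_B^{-1}$ is a Boolean algebra isomorphism, $\eta_{({\cal R},B)}$ is an isomorphism in ${\bf PPresheaf}_R$.

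For $\varepsilon$, I would fix a patch bundle $f: Y\to \Sigma(R)^{\rm zar}$. The composite ${\bf B}({\bf R}(f))$ is the bundle $\phi_f: \Max({\rm Clop}(Y)) \to X({\bf R}(f))$ of Lemma~\ref{lem: continous surjection}. Stone duality supplies the homeomorphism $\tau_Y: Y \to \Max({\rm Clop}(Y))$ with $\tau_Y(y) = \{U\in {\rm Clop}(Y) : y\not\in U\}$, and Lemma~\ref{lem: bundle and stalks} gives $\phi_f(\tau_Y(y)) = f(y)$ for every $y\in Y$. Hence $\tau_Y^{-1}$ is a homeomorphism satisfying $f\circ \tau_Y^{-1} = \phi_f$, so $\varepsilon_f := \tau_Y^{-1}$ is an isomorphism in ${\bf PBundle}_R$, equality being a special case of the containment condition in the definition of a bundle morphism.

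Naturality of $\eta$ and $\varepsilon$ reduces to two compatibility squares that follow from Stone duality. For a morphism of patch presheaves specified by $h: B_2\to B_1$, the functor ${\bf R}\circ{\bf B}$ yields the double-dual $(h^*)^*: {\rm Clop}(\Max(B_2))\to{\rm Clop}(\Max(B_1))$, and a direct computation identifies $\sigma_{B_1}\circ h$ with $(h^*)^*\circ\sigma_{B_2}$ by unwinding the definitions of the dual maps. Analogously, for a morphism of patch bundles $g: Y_1\to Y_2$, one checks $\tau_{Y_2}\circ g = (g^*)^*\circ\tau_{Y_1}$. The main obstacle will be the bookkeeping of the contravariance built into Definition~\ref{dfn: patch presheaf morphism}: at each step one has to verify that the ring inclusions produced by the composite functors really coincide with the original rings, not just contain them, and that the Boolean algebra morphisms flow in the correct direction, so that $\eta$ and $\varepsilon$ assemble into genuine natural isomorphisms rather than mere natural transformations.
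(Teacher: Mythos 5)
Your proposal is correct and follows essentially the same route as the paper: both compute ${\bf R}\circ{\bf B}$ and ${\bf B}\circ{\bf R}$ explicitly, use Lemma~\ref{stalks lemma} to identify $\bigcap_{\m\in U_e}{\cal R}_\m$ with $R_e$ and Lemma~\ref{lem: bundle and stalks} to identify $\phi_f\circ\tau_Y$ with $f$, and then invoke the naturality of Stone duality for the compatibility squares. Your write-up is somewhat more explicit about the components $\sigma_B$ and $\tau_Y$ and about why the inclusion conditions in the two morphism definitions are automatically satisfied (indeed with equality), but the substance is the same.
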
 

\begin{proof} 
Let $f:Y \rightarrow X$ be a patch bundle. Then, by Lemma \ref{lem: bundle and stalks}, ${\bf B}({\bf R}(f))$ is the patch bundle $$\Max({\rm Clop}(Y))  \rightarrow X.$$
Similarly, if $f_1\to f_2$ is a morphism between the patch bundles $f_1:Y_1\rightarrow X_1$ and $f_2:Y_2\rightarrow X_2$, then 
${\bf B}({\bf R}(f_1\to f_2))$ is the morphism from the patch bundle ${\bf B}({\bf R}(f_1))$ to the patch bundle ${\bf B}({\bf R}(f_2))$ defined by the continuous map $$\Max({\rm Clop}(Y_1)) \rightarrow \Max({\rm Clop}(Y_2)).$$ 
Using that Stone duality is a dual equivalence of categories, we then obtain a natural isomorphism of ${\bf B} \circ {\bf R}$ with the identity functor on ${\bf PBundle}_R$.

Now let $({\cal R},B)$ be a patch presheaf for $R$.  Then ${\bf R}({\bf B}({\cal R},B))$ is the patch presheaf $$(\{R_U:U \in {\rm Clop}(\Max(B)\},{\rm Clop}(\Max(B))),$$ where, using Lemma~\ref{stalks lemma}, $R_U =R_e$ for $e \in B$ such that $U = \{\ff m: e \not \in \m\}$.  Applying ${\bf R} \circ {\bf B}$ to a  morphism of patch presheaves $({\cal R}_1,B_1) \to ({\cal R}_2,B_2)$ results in a morphism of presheaves involving the Boolean algebra homomorphism $${\rm Clop}(\Max(B_2)) \rightarrow {\rm Clop}(\Max(B_1)).$$ So the fact that ${\bf R} \circ {\bf B}$ is naturally isomorphic to the identity functor on ${\bf PPresheaf}_R$ is a consequence of Stone duality.
\end{proof}

\section{Patch spaces and distinguished patch presheaves}\label{sec: 5 patch spaces and distinguished patch presheaves}

In order to describe the precise relationship between patch spaces and patch presheaves, we need an additional restriction on patch presheaves. 

\begin{definition} \label{dfn: dist}
A patch presheaf $({\cal R}, B)$ of subrings of $R$ is {\it distinguished} if
\begin{enumerate}[label=\textup{(\alph{*})}, mode=unboxed]
\item\label{dfn: dist a} for each $t \in R$, there exists $e_t \in B$ such that $t \in R_e$ if and only if $ e \leq e_t$; and
\item\label{dfn: dist b} the $e_t$ separate points; i.e., for distinct maximal ideals $\m$ and ${\mathfrak{n}}$ of $B$, there is $t \in R$ such that  $e_t$ is in one of $\m$ and ${\mathfrak{n}}$ but not the other.
\end{enumerate}
\end{definition}
Condition \ref{dfn: dist a} of being distinguished guarantees that for each $t \in R$, the patch presheaf contains a unique smallest ring  containing $t$, namely, $R_{e_t}$. 

\medskip

Let us recall from Section~\ref{subsection: patch spaces} that ${\Sigma}(R)^{\rm patch}$ denotes the space $\Sigma(R)$ of subrings of $R$ endowed with the patch topology.

\begin{lemma}\label{lem: dist} Let $({\cal R}, B)$ be a distinguished patch presheaf for the ring $R$.
\begin{enumerate}[label=\textup{(\roman{*})}, mode=unboxed]
\item\label{lem: dist 1} If $\m$ is a maximal ideal of $B$, then $t \in {\cal R}_{\m}$ if and only if {$e_t \notin \m$}.
\item\label{lem: dist 2} If we consider $X({\mathcal R})$ as a subspace of  ${\Sigma}(R)^{\rm patch}$, the map $$\phi:\Max(B) \rightarrow X({\mathcal R}):{\mathfrak{m}} \mapsto {\mathcal{R}}_\m$$ is a homeomorphism.
\end{enumerate}
\end{lemma}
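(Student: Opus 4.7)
The plan is to handle the two parts in turn, with (i) following directly from the defining properties of distinguished patch presheaves and (ii) being derived from (i) together with Lemma~\ref{lem: continous surjection} and standard topological facts.

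For part (i), I would unwind Definition~\ref{dfn: stalks}, according to which ${\cal R}_\m = \bigcup_{e \notin \m} R_e$. If $e_t \notin \m$, then applying condition \ref{dfn: dist a} of Definition~\ref{dfn: dist} with $e = e_t$ yields $t \in R_{e_t}$, so $t \in {\cal R}_\m$. Conversely, if $t \in {\cal R}_\m$, pick $e \notin \m$ with $t \in R_e$; condition \ref{dfn: dist a} then gives $e \leq e_t$. Were $e_t$ in $\m$, the ideal absorption property of Boolean algebras would force $e = e \wedge e_t \in \m$, contradicting $e \notin \m$. Hence $e_t \notin \m$.

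For part (ii), Lemma~\ref{lem: continous surjection} already supplies that $\phi$ is a Zariski-continuous surjection, so three things remain: upgrading continuity to the patch topology, verifying injectivity, and concluding that $\phi$ is a homeomorphism. To strengthen continuity, observe that the patch topology on $\Sigma(R)$ adds to the Zariski topology only the subbasic open sets ${\cal V}(r) = \{S \in \Sigma(R) : r \notin S\}$. Using part (i),
\[
\phi^{-1}({\cal V}(r) \cap X({\cal R})) = \{\m \in \Max(B) : r \notin {\cal R}_\m\} = \{\m \in \Max(B) : e_r \in \m\},
\]
which is the complement of the clopen $\{\m \in \Max(B) : e_r \notin \m\}$, and hence is itself clopen in $\Max(B)$. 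Thus $\phi$ is patch-continuous. For injectivity, given $\m \neq \n$ in $\Max(B)$, condition \ref{dfn: dist b} produces $t \in R$ whose $e_t$ lies in exactly one of $\m$ and $\n$; part (i) then forces $t$ to belong to exactly one of ${\cal R}_\m$ and ${\cal R}_\n$, so $\phi(\m) \neq \phi(\n)$.

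Finally, $\Max(B)$ is a Stone space, hence quasicompact, while $X({\cal R})$, as a subspace of $\Sigma(R)^{\rm patch}$, is Hausdorff (since the patch topology on $\Sigma(R)$ is Stone). Any continuous bijection from a quasicompact space to a Hausdorff space is closed, hence a homeomorphism, completing the proof. The only step that requires genuine thought is recognizing that part (i) serves precisely as the bridge from Zariski-continuity to patch-continuity; beyond that, the argument is a routine assembly of standard facts about Stone spaces and Boolean ideals.
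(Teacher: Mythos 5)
Your proof is correct and follows essentially the same route as the paper's: part (i) is the identical unwinding of condition (a), injectivity uses condition (b) via part (i) exactly as in the paper, and the homeomorphism is obtained from the same quasicompact-to-Hausdorff Closed Map Lemma argument. The only cosmetic difference is that you invoke Lemma~\ref{lem: continous surjection} for Zariski-continuity and then check only the extra subbasic sets ${\cal V}(r)$, whereas the paper directly computes preimages of the basic patch-clopens ${\cal U}(r_1,\ldots,r_n)\cap{\cal V}(r)$; both computations rest on the same use of part (i).
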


\begin{proof} 
\ref{lem: dist 1} If $e_t\notin \m$, then $t\in R_{e_t}\subseteq {\cal R}_{\m}=\bigcup_{e\not\in \m} R_e$. Conversely, suppose $t \in {\cal R}_{\m}$. Then  $t \in R_e$ for some $e\not \in \m$, and so by the choice of $e_t$, we have $e \leq e_t$. Since $e \not \in \m$, it is also the case that $e_t \not \in \m$. 

\ref{lem: dist 2} It is clear that $\phi$ is onto. To see that $\phi$ is injective, let ${\mathfrak{m}}$ and ${\mathfrak{n}}$ be distinct maximal ideals of ${\cal R}$. Since ${\mathcal R}$ is distinguished, there is $t \in R$ such that $e_t $ is in one of $\m$ and ${\mathfrak{n}}$, say $\m$, and not the other. But then by item~\ref{lem: dist 1}, $t  \in   {\cal R}_{\mathfrak{n}} \setminus {\cal R}_{\m}$, proving $\phi$ is injective.  
To see that $\phi$ is continuous in the patch topology, let $r_1,\ldots,r_n,r\in R$, and let 
$$U = \{{\cal R}_{\m}:\m \in \Max(B), r_1,\ldots,r_n \in {\cal R}_{\m}, r \not \in {\cal R}_{\m}\}.$$   
By \ref{lem: dist 1}, 
$$U = \{{\cal R}_{\m}:\m \in \Max(B), e_{r_1} \not \in \m,\ldots, e_{r_n} \not \in \m,e_r \in \m\}.$$ Thus the preimage of $U$ under $\phi$ is the clopen set $$\phi^{-1}(U) = \{\m \in \Max(B): e_{r_1} \not \in \m,\ldots, e_{r_n} \not \in \m,e_r \in \m\}.$$  If we drop $r$ from these calculations, we obtain a similar conclusion, so it follows that $\phi$ is continuous in the patch topology.  
Since $\Max(B)$ is quasicompact and  $X({\mathcal R})$ (as a subspace of ${\Sigma}(R)^{\rm patch}$) is Hausdorff, the Closed Map Lemma (every continuous map from a quasicompact space to a Hausdorff space is closed) implies that $\phi$ is closed. Therefore, as a continuous closed bijection, $\phi$ is a homeomorphism.
\end{proof}

For the next lemma, recall the notation of Definition~\ref{dfn: R(X)}.

\begin{lemma} \label{lem: distinguished R} If  $X$ is a patch space for the ring $R$, then   $({\cal R}(X),{\rm Clop}(X))$ is a distinguished patch presheaf for $R$.
\end{lemma}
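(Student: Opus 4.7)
The plan is to start from the fact already established in Example~\ref{ex: patch spaces} (really Example~\ref{ex: patch presheaf}) that $({\cal R}(X), {\rm Clop}(X))$ is a patch presheaf, and then verify conditions~\ref{dfn: dist a} and~\ref{dfn: dist b} of Definition~\ref{dfn: dist} directly. The key observation that drives both verifications is that for each $t \in R$, the basic Zariski-open set ${\cal U}(t) = \{S \in \Sigma(R) : t \in S\}$ is \emph{clopen} in $\Sigma(R)^{\rm patch}$, since its complement ${\cal V}(t) = \{S : t \notin S\}$ also belongs to the designated patch basis of \eqref{eq: clopens}. Hence $U_t := X \cap {\cal U}(t)$ is a clopen set of the subspace $X$ of ${\Sigma}(R)^{\rm patch}$ (which is a Stone space because $X$ is closed in ${\Sigma}(R)^{\rm patch}$).

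For condition~\ref{dfn: dist a}, I would take $e_t := U_t$ in the Boolean algebra ${\rm Clop}(X)$ (where join is union and the order is set inclusion). If $U$ is a clopen set of $X$ with $U \subseteq U_t$, then every $S \in U$ contains $t$, so $t \in \bigcap_{S \in U} S = \O_X(U)$. Conversely, if $t \in \O_X(U)$, then $t \in S$ for every $S \in U$, which forces $U \subseteq U_t$. Thus $t \in \O_X(U)$ iff $U \le U_t$ in ${\rm Clop}(X)$, as required. (The edge cases $U = \emptyset$ and $U_t = \emptyset$ pose no problem since $\O_X(\emptyset) = R$ always contains $t$ and $\emptyset$ is the bottom of ${\rm Clop}(X)$.)

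For condition~\ref{dfn: dist b}, I would invoke Stone's representation theorem (Theorem~\ref{thm: Stone's repr. thm}) to identify each maximal ideal of ${\rm Clop}(X)$ with a point of $X$ via $x \mapsto \m_x = \{U \in {\rm Clop}(X) : x \notin U\}$. Given distinct maximal ideals $\m \neq {\mathfrak n}$ of ${\rm Clop}(X)$, they correspond to distinct points $x, y \in X$, that is, to two distinct subrings of $R$. So there exists $t \in R$ lying in one of these subrings but not the other; say $t \in x$ and $t \notin y$. Then $x \in U_t$ while $y \notin U_t$, i.e., $U_t \notin \m_x$ and $U_t \in \m_y$, so $e_t = U_t$ separates $\m$ from~${\mathfrak n}$.

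No step looks particularly delicate; the only thing to be careful about is the order convention in ${\rm Clop}(X)$ (join is union, so $U \le V$ means $U \subseteq V$, which matches the patch-presheaf convention $R_V \subseteq R_U$ whenever $U \le V$) and the fact that ${\cal U}(t)$ is clopen in the patch topology, which is what lets $U_t$ play the role of $e_t$ in the Boolean algebra rather than merely a Zariski-open set.
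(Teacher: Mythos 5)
Your proposal is correct and follows essentially the same route as the paper: both take $e_t$ to be the clopen set $X \cap {\cal U}(t)$ and verify condition (a) by the same two-line containment argument, and both use Stone's representation theorem to identify maximal ideals of ${\rm Clop}(X)$ with points of the patch space $X$ for condition (b). Your verification of (b) is in fact slightly more streamlined — you pass directly from the set-theoretic distinctness of the two subrings to an element $t$ of their symmetric difference, whereas the paper first separates the two points by a subbasic patch-clopen ${\cal U}(r_1,\ldots,r_n)\cap{\cal V}(r)$ and then extracts such a $t$ from $\{r_1,\ldots,r_n,r\}$ — but this is the same idea, since the Hausdorffness invoked there rests on exactly that distinctness.
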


\begin{proof}
We have already observed in Example~\ref{ex: patch presheaf} that $({\cal R}(X), {\rm Clop}(X))$ is a patch presheaf for  $R$. We claim that, if $X$ is a (closed) subspace of ${\Sigma}(R)^{\rm patch}$, it is also distinguished. For property \ref{dfn: dist a}, let $t \in R$,  and consider $${\cal U}_X(t)=\{S \in X:t \in S\}.$$ Since  ${\cal U}(t)$ is clopen in the patch topology, ${\cal U}_X(t)$ is clopen in the subspace topology of $X$. If $V$ is another clopen in $X$ such that $t \in \O_X(V)=\bigcap_{S\in V}S$, then $V \subseteq {\cal U}_X(t)$. Thus  ${\cal U}_X(t)$, viewed as an element of the Boolean algebra of clopens of $X$, behaves as $e_t$ in Definition~\ref{dfn: dist}\ref{dfn: dist a}.

To show that the ${\cal U}_X(t)$ play the role of the $e_t$ in   condition \ref{dfn: dist b} in Definition~\ref{dfn: dist}, suppose $\m_1$ and $\m_2$ are distinct maximal ideals of the Boolean algebra of clopens of $X$. 
We show there is $t \in R$ such that ${\cal U}_X(t)$ is in one of $\m_1$ and $\m_2$ but not the other.
By Theorem~\ref{thm: Stone's repr. thm}, there are rings $S_1,S_2 \in X$ such that 
\begin{center} $\m_i = \{U \in $ Clop$(X):S_i \not \in U\}.$
\end{center}
Since the patch topology is Hausdorff and zero-dimensional, there is a subbasic clopen set $U={\cal U}(r_1,\ldots,r_n) \cap {\cal V}(r)$ of $X$, where $r_1,\ldots,r_n,r\in R$, such that $S_1\in U$ and $S_2\notin U$.  It then follows that there is $t \in \{r_1,\ldots,r_n,r\}\subseteq R$ such that either $S_1 \in {\mathcal{U}}_X(t)$ and $S_2 \not \in {\mathcal{U}}_X(t)$, or 
$S_1 \not \in {\cal U}_X(t)$ and $S_2  \in  {\cal U}_X(t)$.
Either way, exactly one of $S_1$ and $S_2$ is contained in ${\cal U}_X(t)$, and hence ${\cal U}_X(t)$ is in exactly one of $\m_1$ and $\m_2$. Thus the clopens of the form ${\cal U}_X(t) $ play the role of $e_t$ in Definition~\ref{dfn: dist}\ref{dfn: dist b}, and so ${\cal R}(X)$ is distinguished.
\end{proof}

We have stated and proved Lemma \ref{lem: distinguished R} exclusively for patch spaces, a context adequate for the objectives of this section. It is worth observing, nonetheless, that the argument is applicable to any subspace of ${\Sigma}(R)^{\rm patch}$.

\begin{theorem}\label{thm: one-to-one dist patch} 
The mappings $X \mapsto ({\cal R}(X), {\rm Clop}(X))$ and $({\cal R}, B)\mapsto X(\cal R)$ define a one-to-one correspondence between patch spaces for $R$ and distinguished patch presheaves for~$R$.  
\end{theorem}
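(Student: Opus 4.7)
The plan is to show that the two maps are mutually inverse, after first verifying each is well defined on the respective collections.

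The well-definedness of $X\mapsto ({\cal R}(X),{\rm Clop}(X))$ is exactly Lemma~\ref{lem: distinguished R}. For the reverse direction I must check that $X({\cal R})$ is a patch space whenever $({\cal R},B)$ is a distinguished patch presheaf. Lemma~\ref{lem: dist}\ref{lem: dist 2} supplies a continuous bijection $\phi:\Max(B)\to X({\cal R})$, where the target is given the subspace topology from ${\Sigma}(R)^{\rm patch}$, and the lemma asserts this is even a homeomorphism. Since $\Max(B)$ is quasicompact, its image $X({\cal R})$ is a quasicompact subspace of the Hausdorff space ${\Sigma}(R)^{\rm patch}$, hence closed in ${\Sigma}(R)^{\rm patch}$. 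Thus $X({\cal R})$ is a patch space, as required by Definition~\ref{dfn: patch space}.

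Next I verify $X({\cal R}(X))=X$ for a patch space $X$. A point of $\Max({\rm Clop}(X))$ has, by Stone duality (Theorem~\ref{thm: Stone's repr. thm}), the form $\m_S=\{U\in{\rm Clop}(X):S\not\in U\}$ for a unique $S\in X$, and the stalk at $\m_S$ is $\bigcup_{U\ni S}\O_X(U)=\bigcup_{U\ni S}\bigcap_{T\in U}T$. Each summand is contained in $S$, while for every $t\in S$ the clopen set $\mathcal{U}_X(t)=X\cap\mathcal{U}(t)$ contains $S$ and satisfies $t\in \O_X(\mathcal{U}_X(t))$; hence the stalk equals $S$ itself. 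Running over all $S\in X$ gives $X({\cal R}(X))=X$ as collections of subrings, while Lemma~\ref{lem: dist}\ref{lem: dist 2} ensures the topologies agree.

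Conversely, let $({\cal R},B)$ be a distinguished patch presheaf and consider ${\cal R}(X({\cal R}))$. By Lemma~\ref{lem: dist}\ref{lem: dist 2}, $\phi:\Max(B)\to X({\cal R})$ is a homeomorphism, so the dual Stone-duality isomorphism $B\to{\rm Clop}(X({\cal R}))$ sends $e\in B$ to the clopen $\phi(\{\m:e\notin\m\})=\{{\cal R}_{\m}:e\notin\m\}$. Lemma~\ref{stalks lemma} then gives
\[
\O_{X({\cal R})}(\phi(\{\m:e\notin\m\}))=\bigcap_{e\notin\m}{\cal R}_{\m}=R_e,
\]
so ${\cal R}(X({\cal R}))$ coincides with $\{R_e:e\in B\}$ under the Stone-duality identification of Boolean algebras. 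This shows the second composition is the identity up to this canonical identification.

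The main obstacle I anticipate is bookkeeping: the reverse composition really produces a presheaf indexed by ${\rm Clop}(\Max(B))$ rather than by $B$ itself, so one needs to be explicit that the Stone duality isomorphism $B\cong{\rm Clop}(\Max(B))$ is the canonical identification used when phrasing the bijection. Once this is noted, the key content is packaged into Lemmas~\ref{stalks lemma}, \ref{lem: dist}, and~\ref{lem: distinguished R}, together with the Closed Map Lemma and Stone duality.
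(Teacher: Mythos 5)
Your proposal is correct and follows essentially the same route as the paper: well-definedness via Lemma~\ref{lem: distinguished R} and Lemma~\ref{lem: dist}\ref{lem: dist 2} together with the quasicompact-image-in-Hausdorff argument, the identity $X({\cal R}(X))=X$ via Stone duality and the clopens ${\cal U}_X(t)$, and the identity ${\cal R}(X({\cal R}))={\cal R}$ via the homeomorphism $\phi$ and Lemma~\ref{stalks lemma}. Your explicit remark about the canonical Stone-duality identification $B\cong{\rm Clop}(\Max(B))$ is a reasonable bookkeeping point that the paper handles implicitly.
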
 

\begin{proof}  Let $({\cal R},B)$ be a distinguished patch presheaf for $R$. That $X({\cal R})$ is a patch space for $R$ follows from Lemma~\ref{lem: dist}\ref{lem: dist 2}. Specifically, the lemma implies  $X({\cal R})$ is quasicompact in the patch topology, and as such it is a closed subspace of the compact Hausdorff space $\Sigma(R)^{\rm patch}$. On the other hand, by Lemma~\ref{lem: distinguished R}, $({\cal R}(X),{\rm Clop}(X))$ is a distinguished patch presheaf for every subspace $X$ of the patch space $\Sigma(R)^{\rm patch}$. 

Next, we claim that if $X$ is a patch space for $R$, then $X({\mathcal R}(X)) = X$. 
In showing first that $X \subseteq X({\mathcal R}(X))$ we do not need the assumption that $X$ is a patch space. 
Let $S \in X$. Using Theorem~\ref{thm: Stone's repr. thm} (Stone duality), the set $$\m = \{U \in {\mbox{ Clop}}(X):S \not \in U\}$$ is a maximal ideal of the Boolean algebra Clop$(X)$.
The rings in $X({\cal R}(X))$ are of the form $\O_X(U)$, where $U \in {\rm Clop}(X)$, and so by definition, 
$${\cal R}(X)_{\m} = \bigcup_{U \not \in  \m}\O_X(U).$$ 
Thus 
$$ {\cal R}(X)_{\m} = \bigcup_{U \not \in \m}\O_X(U) = \bigcup_{U\ni S}\O_X(U) =S,$$ where the last equality follows from the fact that if $a \in S$, then the set $X \cap {\cal U}(a)$ is clopen in $X$  and $S \in X \cap {\cal U}(a)$, so that $$a \in
\O_X(X \cap {\cal U}(a)) \subseteq 
\bigcup_{S \in U}\O_X(U)$$ 
(the fact that $\bigcup_{U\ni S}\O_X(U) \subseteq S$ is a consequence of the definition). Thus $S \in X({\cal R}(X))$, which proves $X \subseteq X({\cal R}(X))$. 

For the reverse inclusion, suppose  ${\cal R}(X)_{\m} \in X({\cal R}(X))$, where $\m$ is a maximal ideal of the Boolean algebra of clopens of $X$. By assumption, $X$ is a patch space, so it is Stone and, by Theorem~\ref{thm: Stone's repr. thm}, it is homeomorphic to the Stone space ${\rm Max}({\rm Clop}(X))$. 
Then, there is $S\in X$ such that $$\m = \{U \in {\mbox{Clop}}(X):S \not \in U\}$$ and so $U\not\in \m$ if and only if $S\in U$. As in the previous paragraph, this implies that $$ {\cal R}(X)_\m=\bigcup_{U \not \in  \m}\O_X(U) = \bigcup_{U \ni S}\O_X(U)=
S,$$  which verifies that $X = X({\cal R}(X))$.
 
Finally, for $({\cal R}, B)$ a distinguished patch presheaf for the ring $R$, we claim ${\cal R}(X(\cal R)) = {\cal R}$.  Let $S \in {\cal R}(X(\cal R))$. Then $S = \O_{X({\cal R})}(U)$,  where $U$ is a clopen subset of $X({\cal R})$. The map $$\phi:\Max(B) \rightarrow X({\mathcal R}):{\mathfrak{m}} \mapsto {\mathcal{R}}_\m$$ is a homeomorphism by Lemma~\ref{lem: dist}\ref{lem: dist 2}, so $\phi^{-1}(U)$ is a clopen set of $\Max(B)$, and $$S = 
\bigcap_{\m \in \phi^{-1}(U)}{\cal R}_{\m}.$$ 
Since $\phi^{-1}(U)$ is clopen in $\Max(B)$, there is $e \in B$ such that $$\phi^{-1}(U) =\{\m\in \Max(B):e \not \in \m\}.$$ Therefore, by Lemma~\ref{stalks lemma},  $$S = 
\bigcap_{\m \in \phi^{-1}(U)}{\cal R}_{\m}=
\bigcap_{\m \not \ni e}{\cal R}_{\m} = 
R_e,$$ proving $S \in {\cal R}$. This shows that ${\cal R}(X(\cal R)) \subseteq {\cal R}$. For the reverse inclusion, let $e \in B$. We claim $R_e \in {\cal R}(X(\cal R))$. Using Lemma~\ref{stalks lemma}, $$R_e = \bigcap_{\m \not \ni e}{\cal R}_{\m} =  \O_{X({\cal R})}(\phi(\{\m:e \not \in \m\})),$$
and so $R_e \in {\mathcal R}(X(\cal R))$ since $\phi(\{\m:e \not \in \m\})$ is clopen in $X$. This proves that ${\cal R}(X(\cal R)) = {\cal R}$, which completes the proof of the theorem. 
\end{proof}

\begin{remark}\label{rem: patch spaces are bundles}
    Since every patch space is a Stone space (being closed in ${\Sigma}(R)^{\rm patch}$ with respect to the patch topology), the identity map  of a patch space $X$ is a patch bundle. More precisely, denote by $X_Z$ the space $X$ with the Zariski topology and by $X_P$ the space $X$ with the patch topology. The map $$f:X_P \rightarrow X_Z:x \mapsto x$$ is a patch bundle since every open set in the Zariski topology is open in the patch topology, and hence $f$ is continuous. The associated patch presheaf ${\bf R}(f)$ of the bundle $f$ is $$(\{\O_X(U):U \in {\rm Clop}(X_P)\},{\rm Clop}(X_P)) = ({\mathcal{R}}(X_P),{\rm Clop}(X_P))$$ which is distinguished by Theorem \ref{thm: one-to-one dist patch}.
\end{remark}

\section{Patch algebras} \label{sec: 6 Patch algebras}

  In this section, we consider the problem of realizing a collection of subrings of a given ring $R$ as quotients of a single ring. The pivotal concept in achieving this objective lies in the notion of {\it patch algebra}. The definition of patch algebras builds upon Specker algebras, introduced in \cite{BMO} and discussed in Section \ref{subsec: Specker algebra}.

\begin{definition}\label{dfn: R[presheaf]} The {\it patch algebra of a patch presheaf $({\cal R}, B)$ for $R$}, denoted
${\mathcal{R}}[B]$, is the subring of the Specker $R$-algebra $R[B]$ defined by  
\smallskip
\begin{center}
${\cal R}[B] = \{t_1e_1 + \cdots + t_n e_n : n>0, e_i \in B$ and $t_i \in R_{e_i}$ for every $i\in \{1,\ldots, n\}\}$.
\end{center}
\end{definition} 

The fact that a patch algebra is indeed an algebra over a relevant ring is proved in the next proposition. 

\begin{proposition} \label{prop: R presheaf}
Let $({\cal R},B)$ be a patch presheaf for the ring $R$. 

\begin{enumerate}[label=\textup{(\arabic{*})}, mode=unboxed]
\item\label{prop: R presheaf 1}
${\cal R}[B] $ is an $R_1$-algebra, where $1$ is the top element of $B$. 

\item\label{prop: R presheaf 2} $B$ is a Boolean algebra of faithful idempotents of ${\cal R}[B]$.
\end{enumerate}
\end{proposition}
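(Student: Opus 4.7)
The plan is to show that $\mathcal{R}[B]$ is a subring of the Specker algebra $R[B]$ that contains $R_1$, from which both statements will follow at once. The key ingredient throughout is the order-reversing character of the patch presheaf: by Definition~\ref{dfn: patch presheaf}, $e \leq f$ in $B$ implies $R_f \subseteq R_e$. In particular, since $e \leq 1$ for every $e \in B$, one has $R_1 \subseteq R_e$, and hence $1_R \in R_e$. This immediately gives $e = 1_R \cdot e \in \mathcal{R}[B]$ for every $e \in B$, and the assignment $t \mapsto t \cdot 1_B$ embeds $R_1$ into $\mathcal{R}[B]$.

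For closure under the operations of $R[B]$, take $a, b \in \mathcal{R}[B]$. Using the full orthogonal decomposition available after Lemma~\ref{lem: 1.1}, write
\[
a = \sum_{i=1}^{n} t_i e_i, \qquad b = \sum_{j=1}^{m} s_j f_j,
\]
with $\{e_i\}$ and $\{f_j\}$ full orthogonal families in $B$, $t_i \in R_{e_i}$ and $s_j \in R_{f_j}$. The family $\{e_i \wedge f_j\}_{i,j}$ is a common full orthogonal refinement, and since $e_i \wedge f_j \leq e_i$ (resp.\ $\leq f_j$), the presheaf axiom yields $R_{e_i} \subseteq R_{e_i \wedge f_j}$ and $R_{f_j} \subseteq R_{e_i \wedge f_j}$. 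Thus $t_i, s_j \in R_{e_i \wedge f_j}$, and the rewritings
\[
a + b = \sum_{i,j} (t_i + s_j)(e_i \wedge f_j), \qquad ab = \sum_{i,j} (t_i s_j)(e_i \wedge f_j),
\]
expose $a+b$ and $ab$ as elements of $\mathcal{R}[B]$, because each $R_{e_i \wedge f_j}$ is a ring. Together with the embedding $R_1 \hookrightarrow \mathcal{R}[B]$, this establishes (1).

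For (2), the inclusion $B \subseteq \Id(\mathcal{R}[B])$ has already been observed, and the Boolean operations on $B$ coincide with those inherited from $R[B]$, so $B$ is a Boolean subalgebra of $\Id(\mathcal{R}[B])$. Finally, for every nonzero $e \in B$, Lemma~\ref{lem: 1.1}\ref{lem: 1.1 i} gives $\ann_R(e) = \{0\}$, and since $R_1 \subseteq R$ we conclude $\ann_{R_1}(e) = \{0\}$, i.e., $e$ is faithful over the base ring $R_1$.

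The only mildly delicate step is the passage to a common orthogonal refinement in verifying closure under ring operations; once that is set up, the order-reversing nature of the assignment $e \mapsto R_e$ makes every subsequent verification routine.
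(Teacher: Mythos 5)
Your proof is correct and follows essentially the same route as the paper: the heart of the matter is that $e\wedge f\le e,f$ forces $R_e,R_f\subseteq R_{e\wedge f}$, which gives closure under multiplication, while faithfulness of the elements of $B$ is simply inherited from the ambient Specker algebra $R[B]$. One presentational caveat: your appeal to full orthogonal decompositions of elements of ${\cal R}[B]$ \emph{with coefficients $t_i\in R_{e_i}$} is not supplied by Lemma~\ref{lem: 1.1} (which only controls coefficients in $R$); that refinement is exactly Lemma~\ref{lem: R presheaf}\ref{lem: R presheaf i}, proved after this proposition, and it is in any case unnecessary here, since distributing $ab=\sum_{i,j}t_is_j(e_i\wedge f_j)$ works for arbitrary (non-orthogonal) presentations and $a+b$ is in ${\cal R}[B]$ by mere concatenation of the two sums.
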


\begin{proof} 
\ref{prop: R presheaf 1} That  ${\cal R}[B] $ is a ring follows from the fact that if $e,f\in B$, $t\in R_e$, and $s\in R_f$, then since $ef=e \wedge f \le e, f$, we have  $t,s\in R_{ef}$ and so $ts\in R_{ef}$. That ${\cal R}[B] $ is an $R_1$-algebra follows from the fact that $R_1$ is contained in every ring of ${\cal R}$.

\ref{prop: R presheaf 2} As recalled in Section~\ref{subsec: Specker algebra}, the elements of $B$ are faithful idempotents (with respect to the ring $R$) in the Specker algebra $R[B]$. Thus, since $B \subseteq {\cal R}[B]$, $B$ is a Boolean algebra of (faithful) idempotents of ${\cal R}[B]$. 
\end{proof}

We gather in the next lemma some further properties of ${\cal R}[B]$ that will be useful in the rest of the section.

\begin{lemma}\label{lem: R presheaf} Let $({\cal R}, B)$ be a patch presheaf for the ring $R$.  
\begin{enumerate}[label=\textup{(\roman{*})}, mode=unboxed]
    \item\label{lem: R presheaf i} If $a \in {\cal R}[B] $, then $a =  t_1e_1+ \cdots + t_ne_n,$ where $\{e_1,\ldots,e_n\}$ is an orthogonal subset of $B$, and $t_i \in R_{e_i}.$
    \item\label{lem: R presheaf ii} If $\{e_1,\ldots,e_n\}$ is an orthogonal subset of $B$ and $t_1,\ldots,t_n \in R$ are such that $t_1e_1+\cdots +t_ne_n  \in {\cal R}[B] $, then $t_i \in R_{e_i}$ for each $i$. 
\end{enumerate}
\end{lemma}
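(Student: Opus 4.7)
For part \ref{lem: R presheaf i}, the plan is to refine the idempotents $e_1,\ldots,e_n$ appearing in any representation $a = t_1 e_1 + \cdots + t_n e_n$ of $a \in {\cal R}[B]$ into a pairwise orthogonal family by passing to the atoms of the finite Boolean subalgebra $B_0 \subseteq B$ that they generate. Let $\{g_1,\ldots,g_k\}$ denote the nonzero atoms of $B_0$; each $e_i$ is the join, and (by orthogonality of the $g_j$) also the sum in $R[B]$, of those $g_j$ with $g_j \le e_i$. Substituting these expressions into the sum for $a$ and regrouping gives $a = \sum_j u_j g_j$ where $u_j := \sum_{i:\, g_j \le e_i} t_i$. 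The key point is then to check that $u_j \in R_{g_j}$: the order-reversing consequence of the patch presheaf axiom in Definition~\ref{dfn: patch presheaf} says that $g_j \le e_i$ implies $R_{e_i} \subseteq R_{g_j}$, so each $t_i$ contributing to $u_j$ lies in $R_{g_j}$, and since $R_{g_j}$ is a ring, so does $u_j$.

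For part \ref{lem: R presheaf ii}, the first move is a reduction. Multiplying $a = t_1 e_1 + \cdots + t_n e_n$ by $e_k$ and using orthogonality of the $e_i$ isolates $a \cdot e_k = t_k e_k$, and this element lies in ${\cal R}[B]$ because $e_k \in B \subseteq {\cal R}[B]$ and ${\cal R}[B]$ is a ring. So it suffices to prove the following auxiliary statement: if $t \in R$, $e \in B\setminus\{0\}$, and $te \in {\cal R}[B]$, then $t \in R_e$ (the case $e = 0$ being trivial since $R_0 = R$). By the definition of ${\cal R}[B]$ together with part \ref{lem: R presheaf i}, write $te = \sum_j s_j f_j$ with $\{f_j\}$ an orthogonal subset of $B$ and $s_j \in R_{f_j}$.

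Now I would multiply this equation through by $e$. Setting $g_j := f_j \wedge e$, it becomes $te = \sum_j s_j g_j$ with the $g_j$ orthogonal and $s_j \in R_{g_j}$ (since $g_j \le f_j$, so $R_{f_j} \subseteq R_{g_j}$). Let $g_0 := e \wedge \overline{\bigvee_j g_j}$, so that $\{g_0,g_1,\ldots,g_m\}$ is a full orthogonal decomposition of $e$ in $B$; hence $e = g_0 + g_1 + \cdots + g_m$ in $R[B]$, whence $te = tg_0 + \sum_j tg_j$. Comparison with $te = \sum_j s_j g_j$ gives the identity $tg_0 + \sum_j (t - s_j) g_j = 0$. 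Multiplying through by $g_0$ and successively by each $g_k$, and invoking the faithfulness of nonzero idempotents of $B$ in $R[B]$ from Lemma~\ref{lem: 1.1}\ref{lem: 1.1 i}, produces two cases. If $g_0 \ne 0$, then $tg_0 = 0$ forces $t = 0 \in R_e$. If $g_0 = 0$, then $e = \bigvee_j g_j$, and for each nonzero $g_k$ one obtains $t = s_k \in R_{g_k}$; the intersection axiom of a patch presheaf then yields $t \in \bigcap_{k:\, g_k \ne 0} R_{g_k} = R_{\bigvee_k g_k} = R_e$.

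I expect the main friction to be this case split in part \ref{lem: R presheaf ii}: the argument genuinely needs both the faithfulness of elements of $B$ inside the ambient Specker algebra $R[B]$ (to convert idempotent equations into scalar equations) and the intersection axiom of a patch presheaf (to reassemble the local information $t \in R_{g_k}$ into $t \in R_e$), and introducing the complement $g_0$ is exactly what lets both tools operate uniformly. Part \ref{lem: R presheaf i} by contrast is a routine Boolean atom refinement, using only the order-reversing property of $({\cal R},B)$.
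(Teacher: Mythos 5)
Your proof is correct and follows essentially the same route as the paper's: part \ref{lem: R presheaf i} by refining to an orthogonal family and regrouping coefficients via the order-reversing property, and part \ref{lem: R presheaf ii} by reducing to a single term $te$, applying part \ref{lem: R presheaf i}, and combining faithfulness of the idempotents with the intersection axiom $\bigcap_k R_{g_k} = R_{\bigvee_k g_k}$. The only cosmetic difference is that the paper normalizes the idempotents so that $ee_i \neq 0$ and deduces $e = e_1 + \cdots + e_n$ from $t \neq 0$, whereas you introduce the complementary idempotent $g_0$ and split on whether it vanishes; both handle the same degenerate cases.
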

\begin{proof} \ref{lem: R presheaf i} Since $a \in {\cal R}[B] $, by definition, $a = t_1e_1 + \cdots + t_ne_n,$ where the $e_i$ are elements of $B$ (that we can assume without loss of generality are  different from $0\in B$) and each $t_i \in R_{e_i}$. There are pairwise orthogonal idempotents $f_1,\ldots,f_m$ in $B$ such that each $f_j \leq e_i$ for some $i$. For each $j \in \{1,\ldots,m\}$, let $I_j\subseteq \{1, \dots, n\}$ be the set of $i\in \{1,\dots, n\}$ such that $f_j \leq e_i$, and set $s_j:=\sum_{i\in I_j}t_i$. Since $R_{e_i}\subseteq R_{f_j}$ for every $i\in I_j$, $s_j\in R_{f_j}$ for every $j$. It follows that $a=s_1f_1+\dots + s_m f_m$.

\ref{lem: R presheaf ii}  To ease the notation, set $A := {\cal R}[B] $. It suffices to show that if $te \in A$ for some $t \in R$ and $e \in B$, then $t \in R_e$. Suppose then that, for some $t\in R$ and $e\in B$, $te \in A$. We may assume $t \ne 0$ and $e \ne 0$ since otherwise there is nothing to prove. By item \ref{lem: R presheaf i} there are orthogonal nonzero idempotents $e_1,\ldots,e_n \in B$ and elements $t_1, \ldots, t_n\in R$, with $t_i\in R_{e_i}$, such that $te = t_1e_1 + \cdots + t_n e_n$. In particular, we can assume without loss of generality that $ee_i\ne 0$ for every $i$. Moreover, the fact that the $e_i$ are orthogonal implies $tee_i =t_i ee_i$ for each $i$. Thus $t - t_i \in \ann_R(ee_i)=\{0\}$, and so $t=t_i\in R_{e_i}$ for every $i$. It then follows from faithfulness and the fact that $t\ne0$ that $e=e_1+\cdots+e_n$, so $$t\in R_{e_1} \cap \cdots \cap R_{e_n} = R_{e_1 \vee \cdots \vee e_n} = R_{e_1 + \cdots + e_n}=R_e,$$ which proves item \ref{lem: R presheaf ii}. 
 \end{proof} 
 
\begin{remark}\label{rem: R indecomposable} By item \ref{lem: R presheaf i} of Lemma~\ref{lem: R presheaf}, every $a\in {\cal R}[B]$ can be written in the form $a=t_1e_1 + \cdots + t_n e_n$, for some positive integer $n$, $e_1,\ldots,e_n \in B\setminus\{0\}$ pairwise orthogonal, and $t_i \in R_{e_i}$. If $R$ is indecomposable, since the idempotents of $B$ are faithful, a direct computation shows that $a$ is idempotent only if $a\in B$. Thus, if $R$ is indecomposable, then $B=\Id({\cal R}[B])$ and  every idempotent in ${\cal R}[B]$ is accounted for by $B$.
\end{remark}

\begin{remark}\label{rem: patch presheaf from patch algebra}
  Let $({\cal R},B)$ be a patch presheaf for the ring $R$ and let $A={\cal R}[B]$ be its patch algebra. Then, for every $e\in B$, \[R_e=( A :_R e)=\{ r\in R : re\in A\}.\]
  Indeed, if $r\in R_e$, then $re\in A$ by Definition \ref{dfn: R[presheaf]}. On the other hand, if $re\in A$ for some $r\in R$, then $r\in R_e$ by Lemma~\ref{lem: R presheaf}\ref{lem: R presheaf ii}.
\end{remark} 

We recalled in Section \ref{subsec: Specker algebra} that the elements of the Specker algebra $R[B]$ admit a unique full orthogonal representation with distinct coefficients. The following proposition is a consequence of this uniqueness, together with Lemma~\ref{lem: R presheaf}\ref{lem: R presheaf ii}, which guarantees that the coefficients $t_i$ of an orthogonal decomposition of an element $a\in\cal R[B]$ are in $R_{e_i}$.

\begin{proposition}\label{prop: unique full orth form}
    Let $({\cal R}, B)$ be a patch presheaf for the ring $R$. Every element $a\in {\cal R}[B]$ admits a unique full orthogonal decomposition
    \[ a=\sum_{i=1}^n t_i e_i, \text{ where the }t_i\in R_{e_i}\text{ are distinct and }e_i\in B.\]
\end{proposition}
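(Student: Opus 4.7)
The plan is to derive both existence and uniqueness by piggybacking on the analogous result already established for the ambient Specker algebra $R[B]$ (recalled at the end of Section~\ref{subsec: Specker algebra}), and then using Lemma~\ref{lem: R presheaf}\ref{lem: R presheaf ii} to show the coefficients must lie in the prescribed rings $R_{e_i}$.

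For existence, I would start with an orthogonal decomposition $a = t_1 e_1 + \cdots + t_n e_n$ produced by Lemma~\ref{lem: R presheaf}\ref{lem: R presheaf i}, where the $e_i \in B$ are pairwise orthogonal and $t_i \in R_{e_i}$. If two of the coefficients coincide, say $t_i = t_j = t$ with $i \ne j$, I combine the two terms as $t(e_i \vee e_j) = t(e_i + e_j)$; the fact that $t \in R_{e_i} \cap R_{e_j} = R_{e_i \vee e_j}$ follows from the patch presheaf axiom, so the combined term still has its coefficient in the required ring, and the orthogonality of the remaining idempotents is preserved. Iterating makes all coefficients distinct. To obtain a \emph{full} orthogonal form, let $\bar e = 1 - (e_1 \vee \cdots \vee e_n) \in B$; if $\bar e \ne 0$, append the term $0 \cdot \bar e$ (which lies in $\cal R[B]$ trivially, since $0 \in R_{\bar e}$), and if $0$ already appears among the $t_i$, absorb $\bar e$ into the corresponding idempotent exactly as above.

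For uniqueness, suppose $a = \sum_{i=1}^n t_i e_i = \sum_{j=1}^m s_j f_j$ are two full orthogonal decompositions with distinct coefficients in $\cal R[B] \subseteq R[B]$. Since full orthogonal decompositions with distinct coefficients are unique in the Specker algebra $R[B]$, we obtain $n = m$ and a matching of terms making the two expressions identical. Lemma~\ref{lem: R presheaf}\ref{lem: R presheaf ii} then ensures, for each $i$, that the coefficient actually lies in $R_{e_i}$, so the decomposition inside $\cal R[B]$ is indeed unique.

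The only nontrivial point is the bookkeeping in the existence step: one must be careful that collapsing equal coefficients or padding with a zero term does not destroy the membership $t_i \in R_{e_i}$. This is exactly where the identity $R_e \cap R_f = R_{e \vee f}$ from Definition~\ref{dfn: patch presheaf} is essential, and it is the hinge of the argument. Once existence is in place, uniqueness is immediate from the uniqueness statement in $R[B]$ together with Lemma~\ref{lem: R presheaf}\ref{lem: R presheaf ii}.
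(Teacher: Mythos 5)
Your proposal is correct and follows essentially the same route as the paper, which derives the proposition from the uniqueness of full orthogonal forms in the Specker algebra $R[B]$ together with Lemma~\ref{lem: R presheaf}\ref{lem: R presheaf ii}. The extra bookkeeping you supply for existence (merging equal coefficients via $R_{e}\cap R_{f}=R_{e\vee f}$ and padding with $0\cdot\bar e$) is sound and simply makes explicit what the paper leaves implicit.
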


The next lemma shows that a patch algebra is a solution to the problem of realizing a collection  of subrings of $R$ as factor rings of a single ring. Before proceeding, we observe that in light of the category equivalence established in Theorem \ref{thm: cat equiv} between patch bundles and patch presheaves for a ring $R$, we can define a patch algebra of a patch bundle over $R$ as follows:
\begin{definition}\label{dfn: patch algebra of a patch bundle}
   The {\it patch algebra of a patch bundle $f:Y \rightarrow X$ over $R$} is the patch algebra of the patch presheaf for $R$ given by
   $$({\cal R}, B)={\bf R}(f)=(\{R_U : U\in {\rm Clop}(Y)\}, {\rm Clop}(Y)).$$ Here ${\bf R}$ denotes the functor introduced in Section~\ref{sec: 4 A category equivalence}.  
\end{definition}

\begin{lemma} \label{lem: basic patch}
Let $A$ be the patch algebra of a patch bundle $f:Y \rightarrow X$ over $R$. For each point $y \in Y$, there is a homomorphism of $A$ onto the ring $f(y)$, whose kernel is an idempotent generated ideal of $A$.   
\end{lemma}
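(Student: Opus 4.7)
The point $y \in Y$ corresponds by Stone duality (Theorem \ref{thm: Stone's repr. thm}) to the maximal ideal $\m_y = \{U \in {\rm Clop}(Y) : y \notin U\}$ of $B := {\rm Clop}(Y)$, and by the computation in the proof of Lemma \ref{lem: bundle and stalks} the stalk ${\cal R}_{\m_y} = \bigcup_{U \ni y} R_U$ coincides with $f(y)$. My plan is to construct the desired homomorphism as the restriction to $A = {\cal R}[B] \subseteq R[B]$ of the evaluation-at-$\m_y$ map on the ambient Specker algebra: the Boolean algebra quotient $B \to B/\m_y \cong \{0,1\}$ induces, via functoriality of the Specker algebra construction, a ring homomorphism $\pi_y : R[B] \to R$ characterized by $\pi_y(e) = 1$ when $y \in e$ and $\pi_y(e) = 0$ when $y \notin e$. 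I denote also by $\pi_y$ its restriction to $A$.

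To identify the image, I would write an arbitrary $a \in A$ in orthogonal form $a = t_1 e_1 + \cdots + t_n e_n$ with $t_i \in R_{e_i}$, using Lemma \ref{lem: R presheaf}\ref{lem: R presheaf i}. Since the $e_i$ are pairwise disjoint clopens of $Y$, at most one of them, say $e_{i_0}$, contains $y$. Then $\pi_y(a) = t_{i_0} \in R_{e_{i_0}} \subseteq f(y)$ when such an $i_0$ exists, and $\pi_y(a) = 0 \in f(y)$ otherwise, so $\pi_y(A) \subseteq f(y)$. Conversely, any $t \in f(y)$ lies in some $R_U$ with $U$ clopen and containing $y$; then $tU \in A$ with $\pi_y(tU) = t \cdot 1 = t$, giving surjectivity onto $f(y)$.

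Finally I would show that $\ker \pi_y$ equals the ideal $J$ of $A$ generated by the idempotent set $\m_y$, regarded as a subset of $B \subseteq A$. The inclusion $J \subseteq \ker \pi_y$ is immediate from the definition of $\pi_y$. For the reverse, take $a = \sum t_i e_i \in \ker \pi_y$ written in orthogonal form as above: by the previous paragraph, if some $e_{i_0}$ contains $y$ then $t_{i_0} = 0$ and that term drops out, while every remaining $e_i$ lies in $\m_y$, so each surviving summand satisfies $t_i e_i = (t_i e_i) \cdot e_i \in A \cdot e_i \subseteq J$. Hence $a \in J$, which completes the proof. The argument is essentially formal; the only delicate point is that the orthogonality of the $e_i$ forces at most one term to survive evaluation at $\m_y$, which is what keeps $\pi_y$ landing in the single stalk $f(y)$ rather than in a combination of several stalks.
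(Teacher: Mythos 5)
Your proof is correct, and its skeleton matches the paper's: both identify $y$ with the maximal ideal $\m_y=\{U\in{\rm Clop}(Y):y\notin U\}$, both evaluate an orthogonal decomposition $a=\sum_i t_ie_i$ (with $t_i\in R_{e_i}$, via Lemma~\ref{lem: R presheaf}) at $y$ to land in the stalk $f(y)={\cal R}_{\m_y}$ supplied by Lemma~\ref{lem: bundle and stalks}, and both identify the kernel with the ideal of $A$ generated by $\m_y$. The one genuine divergence is how the map is produced. The paper defines $\psi_y$ directly on full orthogonal decompositions and then must verify by hand that it is well defined (comparing two decompositions via the product $e_1f_1$ and faithfulness) and that it is multiplicative and additive (refining two decompositions to a common orthogonal system). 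You instead obtain the map as the restriction to $A$ of the homomorphism $R[B]\to R$ induced by the Boolean quotient $B\to B/\m_y\cong\{0,1\}\subseteq\Id(R)$, which makes well-definedness and the ring-homomorphism axioms automatic; the price is that you invoke functoriality (equivalently, the universal mapping property) of the Specker construction $B\mapsto R[B]$, which is available in the cited reference \cite{BMMO} but is not recorded in the paper itself, so a careful write-up should cite it explicitly. Everything else in your argument --- the containment $\pi_y(A)\subseteq f(y)$ from $R_{e_{i_0}}\subseteq{\cal R}_{\m_y}$, surjectivity via $tU\mapsto t$, and the two inclusions for the kernel --- is sound and matches the paper's computations.
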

\begin{proof}
Let $({\cal R},B)$ be the patch presheaf that is the image of the patch bundle $f$ under the functor ${\bf R}$. Specifically, $B = {\rm Clop}(Y)$. Let ${y} \in Y$. 
We define a ring homomorphism $\psi_{y}:A \rightarrow f(y)$  in the following way. First, let $\m$ be the maximal ideal corresponding to $y$ under Stone duality, i.e., $$\m = \{U \in {\rm Clop}(Y):y \not\in U\}=\{e\in B : y\not \in e\}.$$ By Lemma~\ref{lem: R presheaf}\ref{lem: R presheaf i}, every element $a\in A$ admits a full orthogonal decomposition $a=\sum_i t_ie_i \in A$, where the $e_i$ form a full orthogonal set in $B$ and the coefficients $t_i \in R_{e_i}$. Since $\m$ is a maximal ideal of $B$ and this decomposition is full orthogonal, it follows that exactly one of the $e_i$ is not in $\m$. 
With this said, we define now 
\[\psi_{y}(a) = t_i,\text{ where }i\text{ is such that }e_i \not \in \m.\] By Lemma~\ref{lem: bundle and stalks}, $X$ is the set of stalks of $({\cal R},B)$, so 
$ f (y)={\mathcal{R}}_\m=\bigcup_{e\not \in \m} R_e$ and hence  $\psi_{y}(a) \in {\mathcal{R}}_{\m}$. 

To see that $\psi_y(a)$ is well defined, suppose $$a = \sum_i t_i e_i = \sum_j s_j f_j$$ are full orthogonal decompositions of $a$ with $t_i \in R_{e_i}$ and $s_j \in R_{f_j}$.  Without loss of generality, we may assume $e_1,f_1 \not \in \m$ and $e_i,f_j \in \m$ for all $i,j \geq 2$.   Then $$ae_1f_1 =t_1e_1f_1=s_1e_1f_1.$$  
If $e_1f_1 =0$, then at least one of $e_1,f_1$ is in $\m$, contrary to assumption. Thus $e_1f_1 \ne 0$ with $(t_1 -s_1)e_1f_1=0$. From the faithfulness of $e_1f_1$ (see Proposition~\ref{prop: R presheaf}\ref{prop: R presheaf 2}) conclude that $t_1 = s_1$, proving that $\psi_y$ is well defined. 

We claim next that $\psi_{y}$ is a ring homomorphism. 
Let $a,b \in A$ and let $\sum a_j f_j$ and $\sum b_k f'_k$ be full orthogonal decompositions of $a$ and $b$, respectively. By multiplying $a$ by $\sum f'_k=1$ and $b$ by $\sum f_j=1$, we may refine the full orthogonal decompositions of $a$ and $b$ to full orthogonal decompositions sharing the same idempotents, say,  $a =\sum_i t_i e_i$ and $b = \sum_i s_ie_i$, with $t_i,s_i \in R_{e_i}$. As noted above, we may assume without loss of generality that $e_1\not \in \m$ and $e_i \in \m$ for all $i \geq 2$.  By definition, \begin{center} $\psi_y(a+b) = t_1+s_1 = \psi_y(a)+\psi_y(b)$ and $\psi_y(ab) = t_1s_1 = \psi_y(a)\psi_y(b)$.
\end{center}
Also, $\psi_y(1) =1$,  and so $\psi_y$ is a ring homomorphism.

To show that $\psi_y$ is onto, pick an arbitrary element $r\in f(y)={\cal R}_\m=\bigcup_{e\not \in \m} R_e$ (note that in our notation $e$ is a clopen set of $Y$). Then $r\in R_{e_0}$ for some clopen $e_0$ not in $\m$ and $a=r e_0=\psi_y^{-1}(r)$.

We claim finally that  the kernel of $\psi_y$ is $\m A$, i.e., the ideal of $A$ generated by $\m$.
It is easy to see that 
\begin{equation}\label{mA}
\m A = \bigcup_{e\in \m} eA.
\end{equation}
This follows from the fact that, if $e$ and $f$ are elements of $\m$ and $a, b\in A$, then $ea+fb=(e\vee f)(ea+fb)$, where $e\vee f \in \m$.

Let $a \in \ker \psi_y$, and write $a = \sum_i t_ie_i$, where the $e_i$ form a full orthogonal set of $B$ and $t_i \in R_{e_i}$ for every $i$. As noticed above, there is exactly one choice of $i$ for which $e_i \not \in \m$. For this choice of $i$, $t_i=0$, and so $a \in \m A$.  This proves $\ker \psi_y \subseteq \m A$. 

To prove the reverse inclusion, let $a \in \m A$. In light of \eqref{mA}, there is $e\in \m$ such that $a = e  \sum_i t_ie_i$, where the $e_i$ are pairwise orthogonal elements of $B$ and $t_i \in R_{e_i}$. Then, $a=\sum t_i f_i$, with $f_i=ee_i$ and $t_i\in R_{e_i}\subseteq R_{f_i}$. Since the $f_i$ are pairwise orthogonal and  all in $\m$, $\psi_y(a) =0$, proving that $\m A \subseteq \ker \psi_y$.
\end{proof} 

Following the terminology in \cite[Section 2]{BMO}, the {\it Pierce spectrum of a ring $R$} is the Stone space ${\rm Max}(\Id (R))$, while the {\it Pierce stalks of $R$} are the indecomposable rings $R/\m R$, where $\m\in {\rm Max}(\Id (R))$.  The next theorem, which is the main result of this section, relates patch bundles (and so patch spaces) of indecomposable rings with Pierce spectra and Pierce stalks of their patch algebras.

\begin{theorem} \label{thm: main indecomposable}
Let $R$ be an indecomposable ring. If $A$ is the patch algebra of a patch bundle  $f:Y \rightarrow X$ over $R$, then the Pierce spectrum of $A$ is homeomorphic to $Y$ and the rings in $X$ are, up to isomorphism, the Pierce stalks of $A$. 
\end{theorem}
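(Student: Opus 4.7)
The plan is to reduce the theorem to two facts already available: (a) the identification of $\Id(A)$ with $B={\rm Clop}(Y)$ when $R$ is indecomposable, and (b) the existence of the surjective homomorphisms $\psi_y\colon A\to f(y)$ with kernel $\m_y A$ built in Lemma~\ref{lem: basic patch}. Once these are in hand, Stone duality packages everything together.

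First I would unpack the setup. By Definition~\ref{dfn: patch algebra of a patch bundle}, $A={\cal R}[B]$ where $({\cal R},B)={\bf R}(f)$ and $B={\rm Clop}(Y)$. Remark~\ref{rem: R indecomposable} asserts that, since $R$ is indecomposable, every idempotent of $A$ already lies in $B$; hence $\Id(A)=B={\rm Clop}(Y)$. In particular the Pierce spectrum of $A$ is $\Max(\Id(A))=\Max({\rm Clop}(Y))$, and Stone duality (Theorem~\ref{thm: Stone's repr. thm}) gives a homeomorphism $\Max({\rm Clop}(Y))\to Y$ sending $\m\mapsto$ the unique $y\in Y$ with $\m=\{U\in{\rm Clop}(Y):y\notin U\}$. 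This provides the desired homeomorphism between the Pierce spectrum of $A$ and $Y$.

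For the second assertion, I would fix $y\in Y$ and let $\m=\m_y$ be the corresponding maximal ideal of $B=\Id(A)$. Lemma~\ref{lem: basic patch} supplies a surjective ring homomorphism $\psi_y\colon A\to f(y)$ with $\ker\psi_y=\m A$, so the first isomorphism theorem gives $A/\m A \cong f(y)$. Conversely, every maximal ideal of $\Id(A)$ is of the form $\m_y$ for some $y\in Y$ by the homeomorphism above, so the collection $\{A/\m A:\m\in\Max(\Id(A))\}$ of Pierce stalks of $A$ coincides, up to isomorphism, with $\{f(y):y\in Y\}=X$, which is exactly the assertion.

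The arguments are essentially bookkeeping on top of the lemmas already proved: the genuine content is in Remark~\ref{rem: R indecomposable} (which ensures no extra idempotents appear in $A$ beyond those from $B$) and in Lemma~\ref{lem: basic patch} (which identifies the quotients by $\m A$ with the rings $f(y)$). The only point that needs a small verification is that the homeomorphism between $Y$ and $\Max(\Id(A))$ given by Stone duality is precisely the one that matches $y$ to the kernel ideal $\m_y A$ used in Lemma~\ref{lem: basic patch}; this is immediate from the definition of $\m_y$ in the proof of that lemma. There is no serious obstacle, since all the work has been done: the theorem is a synthesis of the indecomposability reduction and the kernel computation.
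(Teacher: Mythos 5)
Your proof is correct and follows the same route as the paper: both use Remark~\ref{rem: R indecomposable} to identify $\Id(A)$ with ${\rm Clop}(Y)$, invoke Stone duality for the homeomorphism of the Pierce spectrum with $Y$, and then apply Lemma~\ref{lem: basic patch} (including the identification of $\ker\psi_y$ with $\m_y A$) to match the Pierce stalks with the rings $f(y)$. No gaps.
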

\begin{proof}
Let $A$ be the patch algebra of the patch bundle  $f:Y \rightarrow X$. By Remark~\ref{rem: R indecomposable}, since $R$ is indecomposable, $\Id(A)=B={\rm Clop}(Y)$. Thus, we get from Theorem~\ref{thm: Stone's repr. thm} (Stone duality) that the Pierce spectrum of $A$, ${\rm Max}(\Id (A))={\rm Max}({\rm Clop}(Y))$, is homeomorphic to~$Y$. Moreover, by Lemma~\ref{lem: basic patch}, for each $y\in Y$ there is a homomorphism $\psi_y$ of $A$ onto the ring $f(y)$ whose kernel is generated by idempotents. In particular, inspection of the proof of the lemma shows that the kernel is generated by the maximal ideal $\m$ of $B$ associated to $y$ by the Stone duality. This proves that, up to isomorphism, the rings in $X$ (namely the $f(y)$) are the Pierce stalks of $A$.     
\end{proof}

In the case of Theorem~\ref{thm: main indecomposable}, there need not be a one-to-one correspondence between Pierce stalks of the patch algebra $A$ and rings in $X$, since the theorem does not rule out the possibility that  a  ring in $X$ can be isomorphic to more than one Pierce stalk. This is because $Y$, by enriching $X$ to a patch bundle, can ``overcount'' the rings in $X$. This phenomenon, however, does not occur when $Y = X$ is a patch space and $f$ is the identity patch bundle, as the next corollary shows.

 \begin{corollary} Let $R$ be an indecomposable ring,  let $X$ be a patch space of subrings of $R$, and let $A$ be the patch algebra of the identity patch bundle of $X$, as in  Remark~\ref{rem: patch spaces are bundles}. Then $X$ (with the patch topology) is  homeomorphic to the Pierce spectrum of $A$, 
 the rings in $X$ are, up to isomorphism, the Pierce stalks of $A$, and there is a one-to-one correspondence between Pierce stalks of $A$ and rings in $X$. 
\end{corollary}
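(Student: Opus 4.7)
The plan is to apply Theorem~\ref{thm: main indecomposable} directly to the identity patch bundle $f:X_P\to X_Z$ supplied by Remark~\ref{rem: patch spaces are bundles}. Since $R$ is indecomposable and this bundle has $Y=X$ with $f(y)=y$ for every $y\in X$, the theorem immediately delivers the first two assertions of the corollary: $X_P$ is homeomorphic to the Pierce spectrum of $A$, and the rings in $X$ are, up to isomorphism, the Pierce stalks of $A$. Thus only the one-to-one correspondence requires separate attention, and for it the key point is that the ``overcounting'' phenomenon flagged in the paragraph preceding the corollary cannot occur when $f$ is the identity.

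To make this precise I would trace through the relevant identifications explicitly. By Remark~\ref{rem: R indecomposable}, indecomposability of $R$ forces $\Id(A)=B=\mathrm{Clop}(X)$, so Stone duality (Theorem~\ref{thm: Stone's repr. thm}) gives a bijection between points $y\in X$ and maximal ideals $\m_y\in \Max(\Id(A))$ via $\m_y=\{U\in\mathrm{Clop}(X):y\notin U\}$. Lemma~\ref{lem: basic patch} then supplies, for each such $y$, a surjective ring homomorphism $\psi_y:A\to f(y)=y$ whose kernel is exactly $\m_y A$, inducing a ring isomorphism $A/\m_y A\cong y$ that realizes the Pierce stalk at $\m_y$ as the specific subring $y$ of $R$. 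The assignment $A/\m_y A\mapsto y$ is then surjective by construction and injective because distinct points of $X$ are, by definition, distinct subrings of $R$; this is the required one-to-one correspondence.

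I do not foresee a substantive obstacle. The only subtlety is a bookkeeping one, namely being careful that the correspondence in question is ``Pierce stalk (as a specific quotient of $A$) $\leftrightarrow$ subring of $R$ lying in $X$'' rather than a correspondence up to abstract ring isomorphism; once this is made explicit, both the homeomorphism from Theorem~\ref{thm: main indecomposable} and the identification $f(y)=y$ immediately produce a genuine bijection, and no further argument is needed.
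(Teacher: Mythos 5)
Your proposal is correct and follows the same route as the paper: apply Theorem~\ref{thm: main indecomposable} to the identity bundle of Remark~\ref{rem: patch spaces are bundles}, then observe that since $f(y)=y$ the stalk at the maximal ideal corresponding to $y$ under Stone duality is exactly the subring $y$, so the homeomorphism $X\cong\Max(\Id(A))$ rules out any overcounting. Your explicit tracing of the identifications via Lemma~\ref{lem: basic patch} is just a more detailed rendering of the paper's one-line justification.
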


\begin{proof}  
This is a direct application of Theorem~\ref{thm: main indecomposable} to the patch bundle $f: X_P \to X_Z \colon x\mapsto x$ defined in Remark~\ref{rem: patch spaces are bundles}. The fact that the correspondence between $X$ and the patch stalks is one-to-one follows from the fact that $X$ is homeomorphic to the Pierce spectrum of $X$.
\end{proof}

\section{Structure of patch algebras}\label{sec: 7 structure of patch algebras}

In order to better situate  patch algebras within commutative ring theory,  we consider the structure of these algebras for spaces consisting of special classes of rings. In the sequel to this paper, we will continue this investigation in more depth. 

 A (commutative) ring for which the annihilator of each element is generated by an idempotent is known in the literature as a {\it Rickart ring} \cite[Definition 7.45]{Lam99} or a {\it weak Baer ring} (see, e.g., \cite{BDMG11}). These are precisely the rings for which every principal ideal is projective \cite[Proposition~7.48]{Lam99}. With this characterization, Rickart rings are referred to in \cite{Bergman} as {\it p.p. rings}. Rickart rings can also be characterized by their Pierce stalks. It is proved in  \cite[Lemma 3.1]{Bergman} that a ring $R$ is Rickart if and only if the 
Pierce stalks of $R$ are domains and the support of every element of $R$ in the Pierce spectrum is clopen.  Here, the {\it support} of an element $a\in R$ is the set of elements $\m$ in the Pierce spectrum of $R$, such that $a\not \in \m R$.

 \begin{proposition}\label{prop: Rickart} A patch algebra of a patch bundle over a domain is a Rickart ring. 
 \end{proposition}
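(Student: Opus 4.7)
The plan is to verify the Bergman criterion recalled in the paragraph preceding the statement: a commutative ring is Rickart if and only if all its Pierce stalks are domains and the support of every element in the Pierce spectrum is clopen. Let $R$ be a domain, let $f\colon Y\to X$ be a patch bundle over $R$ with associated patch presheaf $({\cal R},B)={\bf R}(f)$, and let $A={\cal R}[B]$ be its patch algebra. Since $R$ is a domain it is indecomposable, so Remark~\ref{rem: R indecomposable} gives $\Id(A)=B$ and thus the Pierce spectrum of $A$ is $\Max(B)$, which by Theorem~\ref{thm: main indecomposable} is homeomorphic to $Y$.

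First I would check the ``Pierce stalks are domains'' condition. By Lemma~\ref{lem: basic patch}, for each $y\in Y$ the kernel of the canonical surjection $\psi_y\colon A\to f(y)$ is the idempotent generated ideal $\m A$, where $\m\in\Max(B)$ corresponds to $y$ under Stone duality. Hence every Pierce stalk $A/\m A$ is isomorphic to $f(y)$, which is a subring of the domain $R$, and is therefore itself a domain.

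Next I would verify that the support of each $a\in A$ is clopen in $\Max(B)$. Using Proposition~\ref{prop: unique full orth form}, write $a$ in its unique full orthogonal form
\[
a=\sum_{i=1}^n t_i e_i,
\]
with pairwise orthogonal $e_i\in B$ summing to $1$, distinct coefficients, and $t_i\in R_{e_i}$. For each $\m\in\Max(B)$ exactly one of the $e_i$, say $e_{i(\m)}$, lies outside $\m$ (by Proposition~\ref{prop: max ideals}\ref{prop: max ideals 1}), and the proof of Lemma~\ref{lem: basic patch} then gives $\psi_y(a)=t_{i(\m)}$. Therefore $a\notin\m A$ if and only if $t_{i(\m)}\neq 0$, i.e.\ if and only if $e_{i(\m)}$ belongs to the set $\{e_i : t_i\neq 0\}$. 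Setting $e:=\bigvee_{i\,:\,t_i\neq 0} e_i\in B$ and using that an ideal of a Boolean algebra contains a finite join if and only if it contains each joinand, this becomes the statement that $e\notin\m$. Hence
\[
\mathrm{supp}(a)=\{\m\in\Max(B):e\notin\m\},
\]
which is a basic clopen subset of $\Max(B)$.

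The main step here is the identification of the support, and it is essentially forced by the uniqueness of the full orthogonal decomposition together with the description of the kernel of $\psi_y$ in Lemma~\ref{lem: basic patch}; there is no real obstacle once one commits to Bergman's criterion. Combining the two verifications yields that $A$ is a Rickart ring.
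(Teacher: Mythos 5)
Your proof is correct and follows essentially the same route as the paper's: both verify Bergman's criterion by using Theorem~\ref{thm: main indecomposable} to see that the Pierce stalks are domains and then identifying the support of $a=\sum_i t_ie_i$ with the basic clopen set determined by $\bigvee_{t_i\neq 0}e_i$. The only cosmetic difference is that you deduce the membership criterion $a\in\m A$ from the kernel description in Lemma~\ref{lem: basic patch}, whereas the paper checks it by a short direct computation with idempotents.
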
 

 \begin{proof} Let $A$ be the patch algebra associated to the patch bundle $f:Y \rightarrow X$ of a domain $R$. Since the rings in $X$ (which are subrings of $R$) are domains, we get from Theorem~\ref{thm: main indecomposable} that so are the Pierce stalks of $A$. Using the characterization of Rickart rings in \cite[Lemma 3.1]{Bergman}, it remains to show that the support of every element of $A$ with respect to its Pierce spectrum is clopen.  
 
 Let $a \in A$, and as in Proposition~\ref{prop: unique full orth form}, write $a =\sum_i t_ie_i$, where each $t_i$ is a nonzero element $ R_{e_i}$, the $t_i$ are pairwise distinct, and the $e_i$ are non-zero idempotents of $A$ pairwise orthogonal. For a maximal ideal $\ff m$ of the Boolean algebra $\Id(A)$,  $a$ is in $\ff m A$ if and only if $e_i \in \m$ for all $i$. In fact, if all the $e_i$ are elements of $\m$, then $a=e a$, where $e=\bigvee_i e_i=\sum_i e_i$. On the other hand, if $a\in \m A$, then there is an idempotent element $e\in \m$ such that $a=ea$. Then, $t_i e_i= t_i e e_i$ from which we get $e_i=e e_i\in \m$. Therefore, the support of $a$ is the set of maximal ideals $\ff m$ of $\Id(A)$ that do not contain $e=\bigvee_i e_i=\sum_i e_i$ and hence is a clopen set. This proves that $A$  is a Rickart ring. 
 \end{proof}

A {\it semihereditary ring} is a ring in which every finitely generated ideal is projective. Clearly semihereditary rings are Rickart. In particular, they are exactly the Rickart rings whose Pierce stalks are Pr\"ufer \cite[Theorem 4.1]{Bergman}. The next result is then an easy consequence of the previous proposition. In the following, if $P$ is a ring-theoretic  property, we write {\it patch bundle of $P$ subrings of a ring $R$} to denote a patch bundle $f: Y\rightarrow X$ over $R$ in which $X$ is a space of subrings of $R$ satisfying property $P$.

\begin{corollary}\label{cor: semihereditary}
A patch algebra of a patch bundle of Pr\"ufer subrings of a domain is a semihereditary ring. 
\end{corollary}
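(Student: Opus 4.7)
The plan is to combine Proposition~\ref{prop: Rickart} with Bergman's characterization of semihereditary rings as the Rickart rings whose Pierce stalks are Pr\"ufer, using Theorem~\ref{thm: main indecomposable} to identify the Pierce stalks of the patch algebra with the rings in $X$.

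First, I would observe that since $R$ is a domain, it is in particular indecomposable, so Proposition~\ref{prop: Rickart} applies and $A$ is a Rickart ring. By the cited result of Bergman (\cite[Theorem 4.1]{Bergman}) quoted in the paragraph preceding the corollary, it suffices to show that every Pierce stalk of $A$ is a Pr\"ufer ring.

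Next, I would invoke Theorem~\ref{thm: main indecomposable}: because $R$ is indecomposable, the Pierce stalks of $A$ are, up to isomorphism, precisely the rings in $X = f(Y)$. By hypothesis every ring in $X$ is a Pr\"ufer subring of $R$, so every Pierce stalk of $A$ is Pr\"ufer. Combining this with the fact that $A$ is Rickart, Bergman's theorem gives that $A$ is semihereditary.

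There is essentially no obstacle here, as the work has already been done in Proposition~\ref{prop: Rickart} and Theorem~\ref{thm: main indecomposable}; the only point that deserves a brief mention is that the subrings in $X$ are automatically domains (being subrings of the domain $R$), so ``Pr\"ufer subring of a domain'' unambiguously yields a Pr\"ufer domain, which is exactly the hypothesis needed to feed into Bergman's theorem.
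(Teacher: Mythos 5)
Your proof is correct and follows essentially the same route as the paper: apply Proposition~\ref{prop: Rickart} to get that $A$ is Rickart, use Theorem~\ref{thm: main indecomposable} to identify the Pierce stalks of $A$ with the Pr\"ufer domains in $X$, and conclude by Bergman's characterization of semihereditary rings. The remark that subrings of a domain are automatically domains is a reasonable (if implicit in the paper) clarification.
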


\begin{proof}
 Let $A$ be the patch algebra associated to the patch bundle $f:Y \rightarrow X$ of a domain $R$. Since the rings in $X$ are Pr\"ufer domains, Theorem~\ref{thm: main indecomposable} ensures that so are the Pierce stalks of $A$. Since $A$ is a Rickart ring by Proposition~\ref{prop: Rickart}, we get from the above recalled characterization \cite[Theorem 4.1]{Bergman} due to Bergman, that $A$ is semihereditary.
\end{proof}

 Theorem~\ref{thm: main indecomposable} and Proposition~\ref{prop: Rickart} allow to describe the Pierce spectrum and stalks of the patch algebra associated to a patch bundle over a domain. We give the details on Proposition~\ref{prop: main domain}, after proving the following lemma. 

\begin{lemma}\label{lem: (minimal) primes}
   The following hold for any ring $R$.
\begin{enumerate}[label=\textup{(\roman{*})}, mode=unboxed]
    \item\label{lem: (minimal) primes i} If $\mathfrak p$ is a prime ideal of $R$, then $\mathfrak p \cap \Id(R)$ is an element of the Pierce spectrum of $R$.
    \item\label{lem: (minimal) primes ii}  If $\mathfrak p$ is a prime ideal of $R$ generated by idempotents, then it must be a minimal prime ideal of $R$.
\end{enumerate}
\end{lemma}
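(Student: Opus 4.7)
The plan is to exploit the standard fact that for any idempotent $e$ of $R$ we have $e(1-e)=0$, so that for any prime ideal $\mathfrak p$ of $R$ exactly one of $e$ or $1-e$ lies in $\mathfrak p$; note also that $1-e$ is precisely the Boolean complement of $e$ in $\Id(R)$.

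For part \ref{lem: (minimal) primes i}, I would first verify that $\mathfrak p\cap\Id(R)$ is an ideal of the Boolean algebra $\Id(R)$: it contains $0$; for $e,f\in\mathfrak p\cap\Id(R)$ the join $e\vee f=e+f-ef$ lies in $\mathfrak p$; and for $a\in\Id(R)$ and $e\in\mathfrak p\cap\Id(R)$ the meet $a\wedge e=ae$ lies in $\mathfrak p$. Since $1\notin\mathfrak p$, this ideal is proper. The key observation above then says that for every $e\in\Id(R)$, at least one of $e$ and its Boolean complement $1-e$ lies in $\mathfrak p\cap\Id(R)$, so by Proposition~\ref{prop: max ideals}\ref{prop: max ideals 1} the ideal $\mathfrak p\cap\Id(R)$ is maximal in $\Id(R)$, i.e., an element of the Pierce spectrum of $R$.

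For part \ref{lem: (minimal) primes ii}, suppose $\mathfrak p$ is a prime ideal generated by a set $E$ of idempotents, and let $\mathfrak q$ be any prime ideal of $R$ with $\mathfrak q\subseteq\mathfrak p$; I aim to show $\mathfrak q=\mathfrak p$. Since $\mathfrak p$ is generated by $E$, it is enough to show each $e\in E$ belongs to $\mathfrak q$. Fix such an idempotent $e$. From $e(1-e)=0\in\mathfrak q$ and primality of $\mathfrak q$, either $e\in\mathfrak q$ or $1-e\in\mathfrak q\subseteq\mathfrak p$; the latter, combined with $e\in\mathfrak p$, would force $1=e+(1-e)\in\mathfrak p$, contradicting properness of $\mathfrak p$. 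Hence $e\in\mathfrak q$, giving $\mathfrak p\subseteq\mathfrak q$ and thus $\mathfrak p=\mathfrak q$.

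Neither part presents a genuine obstacle; the only point requiring a moment of care is recognizing that the Boolean complement of an idempotent $e$ in $\Id(R)$ is $1-e$, so that the ring-theoretic identity $e(1-e)=0$ directly supplies the hypothesis needed to invoke Proposition~\ref{prop: max ideals}\ref{prop: max ideals 1} in part~\ref{lem: (minimal) primes i} and to control the alternative in part~\ref{lem: (minimal) primes ii}.
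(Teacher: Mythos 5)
Your proof is correct, and while part \ref{lem: (minimal) primes i} is close in spirit to the paper's, part \ref{lem: (minimal) primes ii} takes a genuinely more direct route. For \ref{lem: (minimal) primes i}, the paper also verifies that $I=\mathfrak p\cap\Id(R)$ is an ideal of $\Id(R)$, but then certifies maximality via Proposition~\ref{prop: max ideals}\ref{prop: max ideals 2} (an ideal of a Boolean algebra is maximal iff it is prime, which follows immediately from primality of $\mathfrak p$ in $R$), whereas you invoke Proposition~\ref{prop: max ideals}\ref{prop: max ideals 1} via the identity $e(1-e)=0$; both are one-line verifications and equally valid. For \ref{lem: (minimal) primes ii}, the paper argues structurally: it applies part \ref{lem: (minimal) primes i} to both $\mathfrak p$ and a prime $\mathfrak q\subseteq\mathfrak p$, uses that one maximal ideal of $\Id(R)$ cannot properly contain another to conclude the two primes share the same idempotents, and then deduces $\mathfrak p\subseteq\mathfrak q$ from the generators. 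You instead argue element-by-element on the generating idempotents: $e(1-e)=0\in\mathfrak q$ forces $e\in\mathfrak q$, since $1-e\in\mathfrak q\subseteq\mathfrak p$ together with $e\in\mathfrak p$ would put $1$ in $\mathfrak p$. Your version is shorter and does not depend on part \ref{lem: (minimal) primes i} at all; the paper's version has the mild advantage of reusing the Boolean-algebra machinery it has already set up (and which it exploits again in Proposition~\ref{prop: main domain}), but nothing is lost either way.
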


\begin{proof}
    \ref{lem: (minimal) primes i} Let $\mathfrak p$ be a prime ideal of $R$. It is routine to check that $I:= \mathfrak p \cap \Id(R)$ is an ideal of $\Id(R)$. Let $x$ and $y$ be two idempotents of $R$ such that $xy\in I\subseteq \mathfrak p$. Then, being $\mathfrak p$ prime, we have $x\in I$ or $y\in I$. This shows that $I\in {\rm Max}(\Id (R))$.

    \ref{lem: (minimal) primes ii} Let $\mathfrak p$ be a prime ideal of $R$ generated by idempotents, and let $\mathfrak q$ be a prime ideal of $R$ contained in $\mathfrak p$. We then get from item \ref{lem: (minimal) primes i} that $\m_1:=\mathfrak p \cap \Id(R)$ and $\m_2:=\mathfrak q \cap \Id(R)$ are maximal ideals of $\Id(R)$ such that $\m_2\subseteq \m_1$. This can happen only if $\m_1=\m_2$, i.e., only if $\mathfrak p$ and $\mathfrak q$ share the same idempotents. Since the generators of $\mathfrak p$ are then contained in $\mathfrak q$, we conclude that $\mathfrak q=\mathfrak p$, as desired.
\end{proof}

 \begin{proposition} \label{prop: main domain}
Let $R$ be a domain. If $A$ is the patch algebra of a patch bundle  $f:Y \rightarrow X$ of $R$, then the Pierce spectrum of $A$ is homeomorphic to the minimal spectrum of $A$ and the rings in $X$ are, up to isomorphism, the quotients of $A$ modulo its minimal prime ideals. 
\end{proposition}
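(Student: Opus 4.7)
The plan is to identify the minimal primes of $A$ with ideals of the form $\mathfrak{m}A$ for $\mathfrak{m}$ a maximal ideal of $\Id(A)=B$ (an equality valid by Remark~\ref{rem: R indecomposable}, since a domain is indecomposable), and then to verify that the assignment $\mathfrak{m}\mapsto \mathfrak{m}A$ is a homeomorphism from the Pierce spectrum $\Max(\Id(A))$ onto $\Min(A)$ endowed with its Zariski subspace topology. Combined with Theorem~\ref{thm: main indecomposable}, which identifies the rings in $X$ with the Pierce stalks $A/\mathfrak{m}A$, this will yield both conclusions of the proposition simultaneously.

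For the bijection, first fix $\mathfrak{m}\in \Max(B)$. By Theorem~\ref{thm: main indecomposable}, the Pierce stalk $A/\mathfrak{m}A$ is isomorphic to some $f(y)\in X$, which is a subring of the domain $R$ and hence itself a domain. Thus $\mathfrak{m}A$ is prime, and since the formula $\mathfrak{m}A=\bigcup_{e\in \mathfrak{m}} eA$ derived in the proof of Lemma~\ref{lem: basic patch} shows that $\mathfrak{m}A$ is generated by idempotents, Lemma~\ref{lem: (minimal) primes}\ref{lem: (minimal) primes ii} gives that $\mathfrak{m}A$ is a minimal prime. Conversely, given any $\mathfrak{p}\in \Min(A)$, Lemma~\ref{lem: (minimal) primes}\ref{lem: (minimal) primes i} yields $\mathfrak{m}:=\mathfrak{p}\cap \Id(A)\in \Max(B)$, and $\mathfrak{m}A\subseteq \mathfrak{p}$ is itself prime by the previous step, so minimality of $\mathfrak{p}$ forces $\mathfrak{m}A=\mathfrak{p}$. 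The bijection, together with Theorem~\ref{thm: main indecomposable}, then shows that the rings in $X$ are, up to isomorphism, the quotients $A/\mathfrak{p}$ with $\mathfrak{p}\in \Min(A)$.

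To promote this bijection to a homeomorphism I would compare basic open sets. For $a\in A$, Lemma~\ref{lem: R presheaf}\ref{lem: R presheaf i} lets me write $a=t_1e_1+\cdots+t_ne_n$ with the $e_i$ pairwise orthogonal in $B$ and each $t_i\in R_{e_i}$; after dropping zero coefficients I may assume every $t_i\neq 0$. Set $e:=e_1\vee\cdots\vee e_n\in B$, so that $a=ae$. I claim $a\in \mathfrak{m}A$ if and only if $e\in \mathfrak{m}$: the ``if'' direction is immediate from $a=ae$, and for ``only if'' I would complete the decomposition to a full orthogonal one by adjoining $0\cdot(1-e)$ and apply the homomorphism $\psi_y$ from Lemma~\ref{lem: basic patch}, where $y\in Y$ corresponds to $\mathfrak{m}$ under Stone duality; the description of $\psi_y$ in terms of the unique full orthogonal decomposition forces the non-$\mathfrak{m}$ idempotent of that decomposition to be $1-e$, and hence $e\in \mathfrak{m}$. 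Consequently, for $a\in A$ the preimage of $\{\mathfrak{p}\in \Min(A):a\notin \mathfrak{p}\}$ is the clopen $\{\mathfrak{m}\in \Max(B):e\notin \mathfrak{m}\}$, and for $e\in B$ the preimage of $\{\mathfrak{m}\in \Max(B):e\notin \mathfrak{m}\}$ under the inverse is $\{\mathfrak{p}\in \Min(A):e\notin \mathfrak{p}\}$. Both directions are thus continuous, giving the desired homeomorphism.

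The principal technical point is the ``only if'' in the claim above, namely that $a\in \mathfrak{m}A$ forces the joint idempotent $e=\bigvee_i e_i$ to lie in $\mathfrak{m}$; this is what rigidly couples the minimal-prime structure of $A$ to its Boolean idempotent structure, and once it is secured each remaining step is routine bookkeeping between the Pierce spectrum and the Zariski topology of $\Min(A)$.
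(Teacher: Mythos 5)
Your proof is correct, and for the bijection between $\Max(\Id(A))$ and $\Min(A)$ it follows the paper's argument essentially verbatim: Pierce stalks are domains by Theorem~\ref{thm: main indecomposable}, so each $\m A$ is an idempotent-generated prime, hence minimal by Lemma~\ref{lem: (minimal) primes}\ref{lem: (minimal) primes ii}, and conversely $\p\cap\Id(A)$ recovers $\m$ with $\m A=\p$ by minimality (you in fact spell out this last step, which the paper only asserts). Where you diverge is the topological upgrade: the paper gets the homeomorphism by first noting $A$ is Rickart (Proposition~\ref{prop: Rickart}) and then citing Kist's theorem, whereas you verify continuity in both directions by hand, using the support computation $a\in\m A\Leftrightarrow \bigvee_i e_i\in\m$ (established via $\psi_y$ and the uniqueness of the full orthogonal decomposition) to match the basic opens of $\Min(A)$ with the clopens of the Pierce spectrum. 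Your route is more self-contained and makes the coupling between idempotents and minimal primes explicit; the paper's route is shorter and reuses machinery it needs anyway for Corollary~\ref{cor: local}. Note that your support computation is essentially the one already carried out inside the proof of Proposition~\ref{prop: Rickart}, so in the context of the paper you are re-deriving a fact that could be quoted; still, the argument as you give it is complete and sound.
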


\begin{proof}
Since $R$ is a domain, $R$ is indecomposable so, by Theorem \ref{thm: main indecomposable}, the rings in $X$ are isomorphic to the Pierce stalks of $A$. The Pierce stalks of $A$ are then domains, which implies that for every $\m$ in the Pierce spectrum of $A$, $\m A$ is a prime ideal of $A$ generated by idempotents. These prime ideals are then minimal prime ideals of $A$ by Lemma \ref{lem: (minimal) primes}\ref{lem: (minimal) primes ii}. Conversely, if $\mathfrak p$ is a minimal prime ideal of $A$, then $\mathfrak p \cap \Id(A)$ is an element of the Pierce spectrum of $A$ by Lemma \ref{lem: (minimal) primes}\ref{lem: (minimal) primes i}, and $\mathfrak p = (\mathfrak p \cap \Id(A)) A$. The mapping $\mathfrak p \rightarrow \mathfrak p \cap \Id(A)$ establishes then a bijection between the minimal spectrum ${\rm Min}(A)$ and the Pierce spectrum of $A$. This proves that the Pierce stalks of $A$ are the quotients of $A$ modulo its minimal prime ideals, as desired. Moreover, $A$ is a Rickart ring by Proposition~\ref{prop: Rickart} and it follows from \cite[Theorem~1]{Kist} that the above bijection is actually a homomorphism. This concludes the proof.
\end{proof}

Recall from \cite{Nic77} that a ring $R$ is {\it clean} if every element is the sum of an idempotent and a unit. Clean rings are precisely the rings whose Pierce stalks are local \cite[Theorem 3.4]{BurSte79}. The next result shows that a collection of local subrings of an indecomposable ring can be realized by factoring a single clean ring.

\begin{proposition}\label{proposition: A clean}
 If $A$ is a patch algebra of a patch bundle of local subrings of an indecomposable ring, then $A$ is a clean ring for which the rings in $X$ are, up to isomorphism, the localizations of $A$ at its maximal ideals. 
\end{proposition}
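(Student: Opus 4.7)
The plan is to deduce cleanness from the main theorem of Section~\ref{sec: 6 Patch algebras} applied to the hypothesis that the rings in $X$ are local, and then to identify the localizations of $A$ at its maximal ideals with the Pierce stalks. Since $R$ is indecomposable, Theorem~\ref{thm: main indecomposable} tells us that the rings in $X$ are, up to isomorphism, the Pierce stalks $A/\m A$ with $\m \in \Max(\Id(A))$. So by hypothesis every Pierce stalk of $A$ is local, and Burgess--Stephenson's characterization \cite[Theorem 3.4]{BurSte79}, recalled just before the statement, yields that $A$ is clean.

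For the identification of rings in $X$ with localizations at maximal ideals, for each $\m \in \Max(\Id(A))$ I would denote by $\pi_\m\colon A \to A/\m A$ the canonical surjection and by $M_\m$ the unique maximal ideal of the local ring $A/\m A$. Set $\mathfrak M_\m := \pi_\m^{-1}(M_\m)$, which is a maximal ideal of $A$. I would first show that every maximal ideal of $A$ arises this way: if $\mathfrak M$ is maximal in $A$, then $\m := \mathfrak M \cap \Id(A) \in \Max(\Id(A))$ by Lemma~\ref{lem: (minimal) primes}\ref{lem: (minimal) primes i}, so $\m A \subseteq \mathfrak M$, and the image $\mathfrak M/\m A$ is a proper ideal of $A/\m A$ and therefore must equal the unique maximal ideal $M_\m$, giving $\mathfrak M = \mathfrak M_\m$.

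The core of the argument is then to prove $A_{\mathfrak M_\m} \cong A/\m A$. Since $A/\m A$ is local with maximal ideal $M_\m = \mathfrak M_\m/\m A$, the map $\pi_\m$ sends each element of $A\setminus \mathfrak M_\m$ to a unit, so by the universal property of localization it factors as $A \to A_{\mathfrak M_\m} \to A/\m A$, producing a surjection in one direction. For the other direction, for any $e \in \m$, the identity $e + (1-e) = 1$ combined with $e \in \mathfrak M_\m$ forces $1-e \notin \mathfrak M_\m$, hence $1-e$ is a unit in $A_{\mathfrak M_\m}$; applying its inverse to the relation $e(1-e)=0$ shows that $e$ vanishes in $A_{\mathfrak M_\m}$. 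Thus $\m A$ lies in the kernel of $A \to A_{\mathfrak M_\m}$, which produces a map $A/\m A \to A_{\mathfrak M_\m}$. These two maps are inverse to one another by construction, giving the desired isomorphism.

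The main obstacle I expect is the localization-kills-idempotents step, namely that every $e \in \m$ becomes zero in $A_{\mathfrak M_\m}$; everything else is a bookkeeping exercise combining Theorem~\ref{thm: main indecomposable}, Lemma~\ref{lem: (minimal) primes}\ref{lem: (minimal) primes i}, and the Burgess--Stephenson criterion. Combining the surjectivity of $\m \mapsto \mathfrak M_\m$ onto $\Max(A)$ with the isomorphism $A_{\mathfrak M_\m} \cong A/\m A$ and Theorem~\ref{thm: main indecomposable} yields that the rings in $X$ are, up to isomorphism, precisely the localizations of $A$ at its maximal ideals.
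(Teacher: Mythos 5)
Your proposal is correct and follows essentially the same route as the paper: cleanness via Theorem~\ref{thm: main indecomposable} together with the Burgess--Stephenson criterion, and then the identification $A_{\mathfrak M}\cong A/\m A$ by observing that every idempotent $e\in\m$ is killed in the localization because $1-e$ becomes a unit. The paper packages this last step as ``$A_{M}$ is indecomposable, hence $\m A_{M}=0$, and localization commutes with the quotient,'' which is the same idea as your explicit pair of mutually inverse maps.
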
 

\begin{proof}
 Since the rings in $X$ are local, Theorem~\ref{thm: main indecomposable} implies that the Pierce stalks of $A$ are local, i.e., that $A$ is a clean ring. Every element $\m$ in the Pierce spectrum of $A$ is contained in at least one maximal ideal $M$ of $A$. Thus, $A/\m A$ being local and $M/\m A$ being its maximal ideal, $\m$ is contained in exactly one maximal ideal $M$ of $A$. On the other hand, each maximal ideal of $A$ contains at most one element the Pierce spectrum of $A$ since the sum in $A$ of any two distinct elements of the Pierce spectrum contains the identity. This establishes a one-to-one correspondence between the Pierce spectrum ${\rm Max}(\Id(A))$ and ${\rm Max}(A)$. Let $M$ be a maximal ideal of $A$ and $\m$ be the unique ideal in the Pierce spectrum of $A$ contained in $M$. Since $A_M$ is indecomposable (being a local ring), $\m A_M=\{0\}$ and so the localization of $A/\m A$ at $M/\m A$ is isomorphic to $A_M$. Moreover, since  $A/\m A$ is local, this localization is isomorphic to $A/\m A$. Thus, $A_M$ is isomorphic to the Pierce stalk of $A$ at the ideal in the Pierce spectrum contained in $M$. 
\end{proof}

A ring $R$ is a {\it Gelfand (or PM) ring} if each prime ideal of $R$ is contained in a unique maximal ideal (see, e.g., \cite{Contessa}). Local rings and zero-dimensional rings (i.e., rings in which every prime ideal is maximal), are basic examples of Gelfand rings. 

\begin{corollary} \label{cor: local}
If $A$ is a patch algebra of a patch bundle of local subrings of a domain, then $A$ is a Gelfand ring whose maximal and minimal spectra are homeomorphic. 
\end{corollary}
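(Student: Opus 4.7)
The plan is to leverage Propositions~\ref{prop: main domain} and~\ref{proposition: A clean} together, since every domain is indecomposable, and so both apply to the patch algebra $A$. Proposition~\ref{prop: main domain} already supplies a homeomorphism ${\rm Max}(\Id(A)) \cong {\rm Min}(A)$, while the proof of Proposition~\ref{proposition: A clean} supplies a bijection from ${\rm Max}(\Id(A))$ to ${\rm Max}(A)$ sending each Pierce ideal $\mathfrak m$ to the unique maximal ideal of $A$ containing $\mathfrak m A$. Upgrading this bijection to a homeomorphism will then yield the claimed homeomorphism between minimal and maximal spectra by composition.

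To establish the Gelfand property, let $\mathfrak p$ be any prime ideal of $A$. Then $\mathfrak p$ contains some minimal prime, which by Proposition~\ref{prop: main domain} has the form $\mathfrak m A$ for a unique $\mathfrak m$ in the Pierce spectrum of $A$. By Theorem~\ref{thm: main indecomposable}, the quotient $A/\mathfrak m A$ is isomorphic to one of the local rings in $X$. Thus the image of $\mathfrak p$ in $A/\mathfrak m A$ lies in the unique maximal ideal of that local ring, which pulls back to a unique maximal ideal of $A$ containing $\mathfrak p$. Hence every prime of $A$ is contained in a unique maximal ideal, so $A$ is Gelfand.

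Finally, I would promote the bijection ${\rm Max}(\Id(A)) \to {\rm Max}(A)$ of Proposition~\ref{proposition: A clean} to a homeomorphism. Its inverse $M \mapsto M \cap \Id(A)$ is well-defined by Lemma~\ref{lem: (minimal) primes}\ref{lem: (minimal) primes i} and is continuous in the Zariski topologies, being the restriction of the standard continuous retraction ${\rm Spec}(A) \to {\rm Max}(\Id(A))$. Since ${\rm Max}(\Id(A))$ is compact Hausdorff as a Stone space (Theorem~\ref{thm: Stone's repr. thm}) and ${\rm Max}(A)$ is Hausdorff because $A$ is Gelfand, the Closed Map Lemma forces this continuous bijection to be a homeomorphism. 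Composing with the homeomorphism of Proposition~\ref{prop: main domain} then yields the desired homeomorphism ${\rm Min}(A) \cong {\rm Max}(A)$. The principal obstacle is precisely this promotion of a bijection to a homeomorphism; it rests on the classical fact that Gelfand rings have Hausdorff maximal spectra, which supplies the separation needed to invoke the Closed Map Lemma. Once that fact is in play, the remainder of the argument consists only of assembling results already in the paper.
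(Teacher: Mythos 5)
Your argument is sound and reaches the conclusion by a genuinely different route in its final step. The Gelfand part coincides with the paper's: both extract uniqueness of the maximal ideal over a given prime from the fact that every prime contains a minimal prime of the form $\mathfrak{m}A$ with $A/\mathfrak{m}A$ local, which is exactly the content of the proofs of Propositions~\ref{prop: main domain} and~\ref{proposition: A clean}. For the homeomorphism ${\rm Min}(A)\cong{\rm Max}(A)$, however, the paper composes the homeomorphism of Proposition~\ref{prop: main domain} with an appeal to Theorem 4.2(d) of \cite{Kist} for Rickart Gelfand rings, whereas you upgrade the bijection $M\mapsto M\cap\Id(A)$ from ${\rm Max}(A)$ to ${\rm Max}(\Id(A))$ to a homeomorphism directly and then compose. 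Your route is more self-contained, trading the second citation of \cite{Kist} for an elementary topological argument.

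One repair is needed in the closed-map-lemma step. The continuous bijection you apply it to runs from ${\rm Max}(A)$ to ${\rm Max}(\Id(A))$, so the hypotheses required are quasicompactness of the domain ${\rm Max}(A)$ and Hausdorffness of the codomain ${\rm Max}(\Id(A))$. You supply the latter, but in place of the former you cite the Hausdorffness of ${\rm Max}(A)$, which is not what the lemma needs (a continuous bijection from a Hausdorff space onto a compact Hausdorff space need not be a homeomorphism). The gap is harmless: ${\rm Max}(A)$ is quasicompact for every commutative ring, since a cover of ${\rm Max}(A)$ by basic open sets $\{M: f_i\notin M\}$ forces the $f_i$ to generate the unit ideal, whence finitely many of these sets already cover. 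With that substitution made, the classical fact that Gelfand rings have Hausdorff maximal spectra --- which you single out as the principal obstacle --- is not needed at all.
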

\begin{proof}
Proposition \ref{prop: main domain} shows that the mapping $\mathfrak p \rightarrow \mathfrak p \cap \Id(A)$ establishes a homeomorphism between the minimal spectrum and the Pierce spectrum of $A$. This implies, by the discussion in the proof of the previous proposition, that each  minimal prime ideal of $A$ (and so each prime ideal of $A$) is contained in a unique maximal ideal of $A$. The patch algebra $A$ is then a Gelfand ring. This, together with the fact that $A$ is a Rickart ring (Proposition~\ref{prop: Rickart}), implies by \cite[Theorem 4.2 (d)]{Kist}, that the minimal and maximal spectrum of $A$ are homeomorphic. \end{proof}

The results in this section can be applied   to the spaces of rings discussed in Example~\ref{example: patch spaces}. As an example of how to do this, we give  a description of the patch algebra of the Zariski-Riemann space of a field. 

\begin{example} \label{example: ZS}
Let $F$ be a field, let $D$ be a subring of $F$, and let $X$ be the Zariski-Riemann space of $F/D$ that consists of the valuation rings of $F$ that contain $D$. As in Example~\ref{example: patch spaces}, $X$ is a patch space for $F$. 
Proposition~\ref{proposition: A clean} implies that the patch algebra $A$ of $X$ is a clean ring  with the property that the localizations of $A$ at its maximal  ideals are the valuation rings in $X$. (This in turn implies that $A$ is an arithmetical ring, i.e., a ring for which the ideals in each localization at a prime ideal form a chain.)  
Moreover, by Proposition~\ref{prop: main domain}, the rings in $X$ are the quotients of $A$ modulo its minimal prime ideals.  Finally, by this same proposition and  
 Corollary~\ref{cor: local}, $A$ is a Gelfand ring whose 
Pierce, minimal and maximal spectra of are all homeomorphic to $X$. 
\end{example}

\section*{Acknowledgments}
The authors are grateful to Giovanni Secreti, Valentin Havlovec and the anonymous referee for careful reading and helpful comments.

\end{document}